\newcounter{basicblock}[section]
\renewcommand{\thebasicblock}{\thesection.\arabic{basicblock}}
\newenvironment{block}[1][]
{\refstepcounter{basicblock}\par\medskip\noindent\textbf{#1~\thebasicblock}.}
{\medskip}
\newenvironment{block*}[1][] {\par\medskip\noindent\textbf{#1}} {\medskip}
\newcommand{\ep}{\varepsilon}
\let\oldforall\forall
\renewcommand{\forall}{\oldforall \,}
\let\oldexists\exists
\renewcommand{\exists}{\oldexists \,}
\newcommand{\dist}{\mathrm{dist}}
\newcommand{\di}{\mathrm{d}}
\newcommand{\ddc}{\mathrm{dd^c}}
\newcommand{\de}{\partial}
\newcommand{\fs}{\omega_{FS}}
\newcommand{\cC}{\mathcal{C}}
\newcommand{\cD}{\mathcal{D}}
\newcommand{\cL}{\mathcal{L}}
\newcommand{\cU}{\mathcal{U}}
\newcommand{\B}{\mathbb{B}}
\newcommand{\C}{\mathbb{C}}
\newcommand{\N}{\mathbb{N}}
\newcommand{\Pb}{\mathbb{P}}
\newcommand{\R}{\mathbb{R}}
\setlist[itemize]{noitemsep, topsep=3pt}
\setlist[enumerate]{noitemsep, topsep=3pt}
\newenvironment{nlist}
    {\begin{enumerate}[label=(\roman*)]}
    {\end{enumerate}}
\newenvironment{defn}{\begin{block}[Definition]}{\end{block}}
\newenvironment{lm}{\begin{block}[Lemma]\begin{itshape}}{\end{itshape}\end{block}}
\newenvironment{prop}{\begin{block}[Proposition]\begin{itshape}}{\end{itshape}\end{block}}
\newenvironment{thm}{\begin{block}[Theorem]\begin{itshape}}{\end{itshape}\end{block}}
\newenvironment{cor}{\begin{block}[Corollary]\begin{itshape}}{\end{itshape}\end{block}}
\newenvironment{rmk}{\begin{block}[Remark]}{\end{block}}
\newenvironment{notn}{\begin{block*}[Notations.]}{\end{block*}}
\newenvironment{outline}{\begin{block*}[Outline of the paper.]}{\end{block*}}
\newenvironment{idea}{\begin{block*}[Main idea.]}{\end{block*}}
\title{log-H\"older regularity of currents and \\equidistribution towards Green currents}
\author{Marco Vergamini}
\address{Scuola Normale Superiore, Pisa, Italy}
\email{marco.vergamini@sns.it}
\date{}
\begin{document}

\begin{abstract}
    Let $f$ be an endomorphism of a projective space or an automorphism of a compact K\"ahler manifold. We prove that the pull-backs of currents under the iterates of $f$ converge exponentially fast to the Green currents when tested at $\log$-H\"older-continuous observables whose $\ddc$'s have bounded mass.
\end{abstract}

\maketitle

\medskip

\noindent\textbf{Mathematics Subject Classification 2020:} 32H50, 32U40, 37F80

\smallskip

\noindent\textbf{Keywords:} K\"ahler manifolds, Green currents, equidistribution, super-potentials

\section{Introduction} \label{intro}
It is a now classical result that every rational map on the Riemann sphere of degree at least $2$ admits a unique measure of maximal entropy, called the \emph{equilibrium measure}, see \cite{B65AM,FLM83BSBM,L83ETDS}, and preimages of generic points equidistribute towards it. These results have been extended to the case of endomorphisms of projective spaces in any dimension and for currents of any bidegree, see for instance \cite{DS09AM,FS94NATOASI}. Dinh and Sibony also proved that the rate of the equidistribution towards the equilibrium measure is exponential \cite{DS10MA}, see also \cite{T11AM} for the case of bidegree $(1,1)$.

Another natural and interesting class of holomorphic dynamical systems is given by the automorphisms of compact K\"ahler manifolds. In this case, under suitable assumptions, we know that there are two invariant currents $T_+$ and $T_-$, the \emph{Green currents} associated to the automorphism and its inverse, which have H\"older-continuous \emph{super-potentials}, see \cite{DS10JAG}. In the same paper, Dinh and Sibony proved the equidistribution towards these currents for forms of suitable degree. Equidistribution properties on K\"ahler manifolds have also been studied for correspondences, which are a natural generalization of both endomorphisms of projective spaces and automorphisms of K\"ahler manifolds, see \cite{DNV18AM,AV24AMP}.

\smallskip

In the results above, the speed of equidistribution has been given in the case where the currents are evaluated at H\"older-continuous test functions or forms. Recently, Bianchi and Dinh showed in \cite{BD23JMPA} the importance of $\log$-H\"older regularity, a condition which is ``exponentially'' weaker than being H\"older, in the study of equilibrium states for projective spaces. They further developed the study of this class of functions and applied it to obtain several statistical properties of equilibrium states, see \cite{BD24GAFA}.

The $\log$-H\"older regularity has also a very nice property that makes it adapted to holomorphic dynamics. Like in the classical H\"older case, the $\log$-H\"older condition also has an \textit{exponent of regularity}. But while the exponent of regularity of H\"older-continuous functions in general decreases under push-forward of holomorphic functions, and so there is a loss of regularity, the one of $\log$-H\"older-continuous functions stays the same. In particular, functions of this regularity are naturally suited for studying the dynamics.

Unfortunately, this is not the case in higher bidegree, where the push-forward of even a smooth form may not be continuous. To extend the study of $\log$-H\"older regularity to any bidegree, we then move our attention from the regularity of forms to that of currents, relying on the theory of \emph{super-potentials} developed by Dinh and Sibony, see for instance \cite{DS10JAG}. We introduce the notion of $\log$-H\"older-continuous super-potentials and we give a precise control on this regularity under iterations of holomorphic functions, see for instance Proposition \ref{pushforward_kahler} and Corollary \ref{pushforward_logq}. In particular, we study the action of both endomorphisms of projective spaces and automorphisms of compact K\"ahler manifolds on currents of this regularity, and we use it to extend the quantitative equidistribution results by Dinh and Sibony to $\log$-H\"older-continuous test forms.

\smallskip

Many applications of equidistribution speed can be found in the literature. One can use it to derive statistical results for the equilibrium measures such as mixing and central limit theorem, see for instance \cite{BD26AM,DTV24Ax,VW25Ax}. There are also other applications: for example, in \cite{BBR25AMP} the authors used the speed of equidistribution to infer geometrical properties of measures of large entropy.

\medskip

Our first main result concerns the rate of equidistribution of currents on K\"ahler manifolds when evaluated at $\log$-H\"older-continuous test forms. This result will be a consequence of the speed of convergence of $\log$-H\"older-continuous super-potentials, see Theorem \ref{ennequsuqupiuuno}. We first need some definitions.

\smallskip

Let $(X,\omega)$ be a compact K\"ahler manifold of dimension $k$, and let $f$ be a holomorphic automorphism of $X$. The \textit{dynamical degree of order $s$} of $f$ is the spectral radius of the pull-back operator $f^*$ acting on the Hodge cohomology group $H^{s,s}(X,\mathbb{R})$. It is denoted by $d_s(f)$, or simply by $d_s$ if there is no confusion. We have $d_0 = d_k = 1$ and $d_s(f^n) = d_s^n$.

A theorem by Khovanskii \cite{K79UMN}, Teissier \cite{T79CRASP}, and Gromov \cite{G90ADGT} implies that the sequence ${s\mapsto\log{d_s}}$ is concave. So, there are integers $0\le p\le p'\le k$ such that
$$1=d_0<\dots<d_p=\dots=d_{p'}>\dots>d_k=1.$$

We assume that $f$ has \emph{simple action on cohomology}, i.e., that we have ${p=p'}$ and $f^*$, acting on $H^{p,p}(X,\mathbb{R})$, admits only one eigenvalue of maximal modulus $d_p$. This are natural condition to ask for when studying the dynamics of automorphisms of K\"ahler manifolds, see for instance \cite[Section 4.4]{DS10JAG} and \cite{DS10CM}. We fix constants $\max\{d_{p-1},d_{p+1}\}<\delta'<\delta<d_p$ such that all the eigenvalues of $f^*$ acting on $H^{p,p}(X,\mathbb{R})$, except for $d_p$, are smaller than $\delta'$. We call $d_p$ the \emph{main dynamical degree} and $\delta$ the \emph{auxiliary dynamical degree} of $f$. Every dependence of the constants on $\delta,\delta'$ will be implicitly contained in the dependences on $f$.

From \cite{DS05JAMS, DS10JAG} we know that there exist a positive closed $(p, p)$-current $T_+$ and a positive closed $(k-p, k-p)$-current $T_-$, the \emph{Green currents} of $f$ and $f^{-1}$ respectively, such that $f^*(T_+)=d_pT_+$ and ${f_*(T_-)=d_pT_-}$.

\medskip

This is our first result, see to Definitions \ref{holdlog} and \ref{def-logq-form} for the definition of $\log^q$-continuous forms and of the norm $\|\cdot\|_q$ for $\log^q$-continuous forms with bounded mass of the $\ddc$. This norm plays an important role in holomorphic dynamics, see for instance \cite[Section 3.2]{BD24GAFA}.

We denote by $\cD_p$ the space generated by positive closed currents on $X$, and by $\cD_p^1$ its subset containing currents with mass bounded by $1$.

\begin{thm} \label{goal_kahler}
    Let $f$ be an automorphism of a compact K\"ahler manifold $X$ with simple action on cohomology. Let $T_+$ be its Green current, $d_p$ its main dynamical degree, and $\delta$ its auxiliary dynamical degree. For every $q>0$ and for every $\log^q$-continuous $(k-p,k-p)$-form $\Phi$ with $\|\ddc\Phi\|_*$ bounded we have
    \begin{equation} \label{conv-kahler}
        \left|\left\langle \frac{(f^*)^n}{d_p^n}(S)-rT_+,\Phi\right\rangle\right| \lesssim \|\Phi\|_q\left(\dfrac{\delta}{d_p}\right)^{\frac{nq}{q+1}}
    \end{equation}
    for every current $S$ in $\mathcal{D}_p^1$, where $r$ is such that $d_p^{-n}(f^n)^*(S)$ converge to $rT_+$. The implicit constant depends only on $f$, $q$ and the cohomology class of $S$.
\end{thm}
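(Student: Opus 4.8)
The plan is to deduce the estimate from the convergence of super-potentials proved in Theorem \ref{ennequsuqupiuuno}, by translating the pairing $\langle\,\cdot\,,\Phi\rangle$ into a super-potential evaluation. Set $R_n := d_p^{-n}(f^*)^n(S)$, a closed $(p,p)$-current in $\cD_p$ of mass controlled in terms of $[S]$. Its cohomology class satisfies $[R_n]-r[T_+]\to 0$ in $H^{p,p}(X,\R)$ exponentially fast, at a rate bounded by $(\delta/d_p)^n$, by the spectral gap furnished by the simple action on cohomology. I first split off this class discrepancy: picking a smooth closed representative $\gamma_n$ of $[R_n]-r[T_+]$ with $\|\gamma_n\|\lesssim(\delta/d_p)^n$, the error $\langle\gamma_n,\Phi\rangle$ is bounded by $(\delta/d_p)^n\|\Phi\|_q$, which is dominated by the right-hand side of \eqref{conv-kahler} since $q/(q+1)<1$ and $\delta<d_p$. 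It remains to estimate $\langle R_n-rT_+-\gamma_n,\Phi\rangle$, where $R_n-rT_+-\gamma_n$ is now exact. Fixing a $(p-1,p-1)$-current $U_n$ with $\ddc U_n=R_n-rT_+-\gamma_n$ and integrating by parts,
\[
    \langle R_n-rT_+-\gamma_n,\Phi\rangle=\langle \ddc U_n,\Phi\rangle=\langle U_n,\ddc\Phi\rangle,
\]
which is exactly the value at the exact $(k-p+1,k-p+1)$-current $\ddc\Phi$ of the super-potential of $R_n-rT_+-\gamma_n$; here the additive ambiguity in $U_n$ cancels because $\ddc\Phi$ is a difference of positive closed currents lying in a common class, of mass $\lesssim\|\ddc\Phi\|_*$.

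Since $\Phi$ is merely $\log^q$-continuous in the sense of Definition \ref{holdlog}, and not smooth, I would make this identification rigorous through a regularization argument. Approximate $\Phi$ by smooth $(k-p,k-p)$-forms $\Phi_\ep$, with $\ddc\Phi_\ep$ smooth and closed, chosen so that $\langle R_n-rT_+-\gamma_n,\Phi_\ep\rangle\to\langle R_n-rT_+-\gamma_n,\Phi\rangle$ while $\|\Phi_\ep\|_q\lesssim\|\Phi\|_q$ uniformly in $\ep$; this is the point where one uses that the relevant regularizations, built from the kernels underlying Proposition \ref{pushforward_kahler} and Corollary \ref{pushforward_logq}, do not inflate the $\log^q$-norm. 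On the smooth forms the integration by parts and the identification with the super-potential of $R_n-rT_+-\gamma_n$ evaluated at $\ddc\Phi_\ep$ are classical, and passing to the limit transfers the identity to $\Phi$. By Definition \ref{def-logq-form}, the resulting test object is a $\log^q$-regular datum whose super-potential norm is controlled by $\|\Phi\|_q$, the boundedness of $\|\ddc\Phi\|_*$ ensuring the finite-mass requirement.

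Having rewritten $\langle R_n-rT_+,\Phi\rangle$, up to the negligible term $\langle\gamma_n,\Phi\rangle$, as the value of the super-potential of $R_n-rT_+$ at a $\log^q$-regular test object of norm $\lesssim\|\Phi\|_q$, I would then invoke Theorem \ref{ennequsuqupiuuno} verbatim: it bounds this quantity by $\|\Phi\|_q\,(\delta/d_p)^{nq/(q+1)}$, the exponent $\tfrac{q}{q+1}$ emerging there from the optimization between the exponential contraction $(\delta/d_p)^n$ available on smooth scales and the $\log^q$-modulus of continuity on fine scales. The normalizing constant $r$ and the mass bound for $R_n$ depend only on the cohomology class of $S$, which accounts for the stated dependence of the implicit constant on $f$, $q$ and $[S]$. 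I expect the genuine difficulty to lie entirely in the regularization step: one must exhibit smooth approximants of $\Phi$ that converge in the pairing against currents of bounded mass and simultaneously keep both $\|\ddc\Phi\|_*$ and the $\log^q$-exponent under control, so that no loss is incurred when passing to the limit in the super-potential estimate.
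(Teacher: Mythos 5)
Your proposal breaks down at the point where you invoke Theorem \ref{ennequsuqupiuuno} ``verbatim''. That theorem bounds $\cU_R\big(d_p^{-n}(f^n)^*(S')\big)$ only for currents $S'$ that are \emph{exact} and of bounded $\|\cdot\|_*$-norm: the dynamical structure of the argument (being the normalized pull-back of a fixed exact current) is essential, not just its exactness. Your current $R_n-rT_+-\gamma_n$ is exact, but it equals $d_p^{-n}(f^n)^*(S-rT_+)-\gamma_n$, and $S-rT_+$ is in general \emph{not} exact: $r$ only kills the component of $\{S\}$ along $\{T_+\}$, so $\{S\}-r\{T_+\}$ is a generally nonzero class in the contracted part of $H^{p,p}(X,\mathbb{R})$. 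If you try to repair this by writing $S-rT_+=(S-rT_+-\alpha)+\alpha$ with $\alpha$ a smooth closed form in the class $\{S\}-r\{T_+\}$, the exact piece is indeed handled by Theorem \ref{ennequsuqupiuuno} together with Corollary \ref{logq_sup_logq} (which, incidentally, makes your entire regularization step unnecessary: it already gives $\cU_{\ddc\Phi}(S')=\langle S',\Phi\rangle$ on $\cD^0_p\cap\cD^1_p$). But you are then left with the residual term $\langle d_p^{-n}(f^n)^*(\alpha)-\gamma_n,\Phi\rangle$, a pairing of the merely $\log^q$-continuous $\Phi$ against a smooth exact current whose $\|\cdot\|_{-2}$-norm decays like $(\delta/d_p)^n$ by Lemma \ref{BD23-3_3}. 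Feeding that exponential decay into the $\log^q$-modulus of continuity (Lemma \ref{logq_test}) yields only $M\big(1+n\log(d_p/\delta)\big)^{-q}\sim n^{-q}$, i.e.\ polynomial decay, far weaker than the right-hand side of \eqref{conv-kahler}. This also exposes a misconception in your sketch: the exponent $\tfrac{q}{q+1}$ does \emph{not} come from ``optimizing the exponential contraction against the $\log^q$-modulus'' --- that optimization can never produce an exponential rate. It comes from the Skoda-type estimate (Proposition \ref{skoda_clicks}) applied to $f^n_*(\ddc\Phi)$, whose $\log$-continuity constant grows in a controlled way by Proposition \ref{pushforward_kahler}.

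The missing ingredient is precisely the paper's treatment of the non-exact part, which cannot be absorbed into Theorem \ref{ennequsuqupiuuno}. The paper first subtracts from $\Phi$ a smooth closed form $\Psi$ so that $\Phi-\Psi$ is a \emph{normalized potential} of $\ddc\Phi$; the term involving $\Psi$ is handled by Lemma \ref{BD23-3_3}, since smooth test forms enjoy the full rate $(\delta/d_p)^n$. After this reduction, when $S$ is smooth the pairing becomes $\cU_n(\ddc\Phi)-r\,\cU_+(\ddc\Phi)$, where $\cU_n,\cU_+$ are the super-potentials of $d_p^{-n}(f^n)^*(S)$ and $T_+$, and Proposition \ref{DS10JAG_4-2-2} gives exponential decay uniformly over \emph{all} test currents $\ddc\Phi$ in $\cD^0_{k-p+1}\cap\cD^1_{k-p+1}$, with no regularity assumption on $\ddc\Phi$ --- here the regularity is carried by $S$, not by the test object. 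Only then does the paper subtract a smooth closed form from $S$ to reduce to $\{S\}=0$ (hence $r=0$), where Theorem \ref{ennequsuqupiuuno} and Corollary \ref{logq_sup_logq} finish the proof. Without this step (or an independent re-proof of Proposition \ref{DS10JAG_4-2-2}), your argument bounds the residual smooth term only polynomially, and the claimed estimate does not follow.
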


Our second main result is a version of Theorem \ref{goal_kahler} for endomorphisms of projective spaces. As in the previous case, the result will be a consequence of the speed of convergence of $\log$-H\"older-continuous super-potentials, see Theorem \ref{logq_esp_pot}. In this case we have a \emph{Green $(1,1)$-current} $T$ such that in any bidegree $(s,s)$ the corresponding Green current is given by $T^s$, and we have $f^*(T^s)=d^sT^s$ where $d$ is the algebraic degree of the endomorphism $f$. Notice that in the projective case there is not a special bidegree $(p,p)$, but the result holds for every bidegree. The case when $s=k$ and $S$ is the Dirac mass at any point of $\mathbb{P}^k$ was proved in \cite[Theorem 3.2]{BD24GAFA}.

\begin{thm} \label{logq_esp_test}
    Let $f$ be a generic endomorphism of $\mathbb{P}^k$. There exists $0<\kappa<d$ such that for every $0\le s\le k$, for every $q>0$ and for all $\log^q$-continuous $(k-s,k-s)$-forms $\Phi$ with $\|\ddc\Phi\|_*$ bounded, we have
    \begin{equation} \label{pk-end-1}
        \left|\left\langle \frac{(f^*)^n}{d^{sn}}(S)-T^s,\Phi\right\rangle\right| \lesssim \|\Phi\|_q\left(\dfrac{\kappa}{d}\right)^{\frac{nq}{q+1}}
    \end{equation}
    for every positive closed $(s,s)$-current $S$ of mass $1$, where the implicit constant depends only on $f$ and $q$.
\end{thm}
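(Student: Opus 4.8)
The plan is to deduce the estimate from the super-potential convergence of Theorem~\ref{logq_esp_pot} by duality, using that on $\mathbb{P}^k$ the cohomology is one-dimensional in every bidegree. Fix $0\le s\le k$ and a positive closed $(s,s)$-current $S$ of mass $1$. Since $d^{-sn}(f^*)^n(S)$ and $T^s$ are positive closed $(s,s)$-currents of mass $1$, they represent the same class; hence
$$R_n:=\frac{(f^*)^n}{d^{sn}}(S)-T^s$$
is a closed, cohomologically trivial $(s,s)$-current, and we may write $R_n=\ddc U_n$ for a current $U_n$ of bidegree $(s-1,s-1)$ (the case $s=0$ being trivial, as then $R_n=0$). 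Integration by parts gives $\langle R_n,\Phi\rangle=\langle U_n,\ddc\Phi\rangle$, transferring the problem to the current $\ddc\Phi$ of bidegree $(k-s+1,k-s+1)$, whose mass is controlled by the assumption that $\|\ddc\Phi\|_*$ is bounded.

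Next I would recognize $\langle U_n,\ddc\Phi\rangle$ as a value of the super-potential of $R_n$. Because $\ddc\Phi$ is $\ddc$-exact, this pairing is unchanged if $U_n$ is modified by a closed current, so it is intrinsic and equals, up to sign, the super-potential $\cU_{R_n}$ evaluated on $\ddc\Phi$; equivalently, it is the pairing of $R_n$ against $\Phi$, which serves as a potential of $\ddc\Phi$. In this reading the $\log^q$-continuity of $\Phi$ is precisely the regularity of the test object that Theorem~\ref{logq_esp_pot} is built to exploit, and the norm $\|\Phi\|_q$ of Definition~\ref{def-logq-form} packages exactly this $\log^q$-modulus together with the mass bound on $\ddc\Phi$.

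Feeding this into Theorem~\ref{logq_esp_pot}, which bounds the convergence of the super-potentials of $R_n$ to $0$ by $(\kappa/d)^{nq/(q+1)}$ against $\log^q$-regular test data, yields the estimate (\ref{pk-end-1}). The spectral constant $0<\kappa<d$ and the uniform control of the super-potentials of the higher Green currents $T^s$ come from the genericity of $f$ and are internal to Theorem~\ref{logq_esp_pot}; in particular, the optimization between the regularization scale and the number of iterates that produces the exponent $nq/(q+1)$ is carried out there. In contrast with the K\"ahler case of Theorem~\ref{goal_kahler}, the implicit constant has no dependence on the cohomology class of $S$, since on $\mathbb{P}^k$ all positive closed $(s,s)$-currents of mass $1$ share the same class.

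The step I expect to demand the most care is the transfer of regularity from the form $\Phi$ to the test object entering the super-potential, carried out uniformly in $s$. A form with $\|\ddc\Phi\|_*$ bounded need not be smooth, so $\ddc\Phi$ is merely a current of bounded mass and $\Phi$ only a $\log^q$-continuous potential; one must verify that this regularity fits the hypotheses under which Theorem~\ref{logq_esp_pot} delivers the improved rate, presumably by approximating $\Phi$ with smooth forms whose error is governed by the $\log^q$-modulus. Once this matching is secured and the spectral estimate of Theorem~\ref{logq_esp_pot} is available, what remains is the routine duality computation sketched above.
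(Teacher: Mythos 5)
Your proposal is correct and follows essentially the same route as the paper's proof: one sets $R:=\ddc\Phi$, observes that $d^{-sn}(f^n)^*(S)-T^s=L_s^n(S-T^s)$ is exact with $\|S-T^s\|_*\le 2$, identifies $\langle L_s^n(S-T^s),\Phi\rangle$ with the super-potential pairing $\cU_R\big(L_s^n(S-T^s)\big)$, and applies Theorem \ref{logq_esp_pot}. The step you flag as demanding the most care — transferring the $\log^q$-regularity of $\Phi$ to the super-potential of $\ddc\Phi$, uniformly and by regularization — is exactly Corollary \ref{logq_sup_logq} (via Lemma \ref{logq_test}), which the paper has already established and invokes at precisely this point.
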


We will discuss in Section \ref{kahler_sub2} some statistical applications of our main results, namely exponential mixing of all orders and central limit theorem for $\log$-continuous observables, see Theorem \ref{expmix}.

\medskip

The techniques developed in this paper are also naturally adapted to the setting of holomorphic correspondences on compact K\"ahler manifolds \cite{DNV18AM,DS06CMH}. Here, one necessarily faces difficulties in the study of both $f^*$ and $f_*$ acting on currents, similar to those of $f^*$ in the case of endomorphisms of $\Pb^k$ (see in particular Lemma \ref{pullback_holder}). These problems have been addressed in \cite{LV26Ax}, and the $\log$-H\"older-regularity introduced in the present paper turns out to be naturally adapted to this setting: the super-potentials of Green currents of holomorphic correspondences have this regularity.

It is a natural question whether the methods introduced in this paper can be extended to the case of horizontal-like maps in any dimension \cite{DNS08AM}. Here, the non-compactness of the space makes it not possible to apply some crucial steps of our construction.

\begin{idea}
    In both Theorems \ref{goal_kahler} and \ref{logq_esp_test}, we reduce ourselves to study the action of $f^*$ on the space of exact currents. This will translate, by duality, on the study of the action of $f_*$ on the space of exact currents with $\log$-H\"older-continuous super-potentials.
    
    Our main theorems will then be the consequences of Theorems \ref{ennequsuqupiuuno} and \ref{logq_esp_pot}, which give the speed of convergence of super-potentials of this regularity. To prove these theorems, we need various results about currents with regular super-potentials. This will be the main focus of Section \ref{techs}, where we develop the theory of $\log$-H\"older-regularity both for forms and currents. In particular, we prove a ``Skoda-type'' estimate for currents with $\log$-H\"older-continuous super-potentials, see Proposition \ref{skoda_clicks}.
    
    Then, we need to prove that the push-forward of currents preserves the $\log$-H\"older-continuity of super-potentials, with a precise control on the multiplicative constant. For automorphisms of compact K\"ahler manifolds this follows without difficulties from the invertibility of $f$. For projective spaces the proof is more delicate, and the non-invertibility of endomorphisms will be one of the main difficulties to handle in this case, see Lemma \ref{pullback_holder}.
\end{idea}

\begin{outline}
    In Section \ref{prels} we recall some notions about currents, in particular the one of super-potentials. In Section \ref{techs} we define the $\log$-H\"older regularity and we prove some useful results about currents with $\log$-H\"older-continuous super-potentials. In Section \ref{kahler} we study the case of automorphisms of compact K\"ahler manifolds; in Section \ref{kahler_sub1} we prove Theorem \ref{goal_kahler}, while in Section \ref{kahler_sub2} we derive exponential mixing as an application. In Section \ref{endo} we deal with the more delicate case of endomorphisms of projective spaces and we prove Theorem \ref{logq_esp_test}.
\end{outline}

\begin{notn}
    Through all the article, $(X,\omega)$ is a fixed compact K\"ahler manifold of dimension $k$. In Section \ref{endo} we will take specifically $X=\mathbb{P}^k$ and $\omega=\omega_{FS}$ the Fubini-Study form. The symbols $\lesssim$ and $\gtrsim$ stand for inequalities up to a positive multiplicative constant. Given two real currents of the same bidegree $S_1$ and $S_2$, we write $S_1\le S_2$ or $S_2\ge S_1$ if $S_2-S_1$ is positive. We also write $|S_1|\le S_2$ to mean $-S_2\le S_1\le S_2$. Given a positive or negative $(s,s)$-current $S$, we define the mass of $S$ as $\|S\|:=|\langle S,\omega^{k-s}\rangle|$.
\end{notn}

\bigskip

\noindent\textbf{Fundings and Acknowledgements.} The author is part of the PHC Galileo project G24-123.

\medskip

\noindent\textbf{Statements and Declarations.} The author has no conflicts of interest to declare that are relevant to the content of this article.

\section{Preliminaries on currents}\label{prels}
In this section we will give some general definitions and results which will be used through all the article.

\subsection{The space of currents}\label{space_of_currs}

Let $X$ be a compact K\"ahler manifold of dimension $k$. Denote by $\mathcal{D}_s(X)$ the real space generated by the positive closed $(s, s)$-currents on $X$, and by $\mathcal{D}_s^0(X)$ the subspace of $\mathcal{D}_s(X)$ given by exact currents, i.e., currents $S$ whose class $\{S\}\in H^{s,s}(X,\mathbb{R})$ is zero. As $X$ will be fixed, we will omit the dependence of these spaces on it.

Given a sequence of currents $(S_n)_{n\ge0}$ and a current $S$, all of the same bidegree, we say that \textit{$S_n$ converge to $S$ in the sense of currents}, and we write $S_n\rightharpoonup S$, if they converge pointwise as linear forms.  We call \textit{weak topology} the topology induced by convergence in the sense of currents.

We now recall some notions about currents on $X$, see \cite[Section 2.2]{DS10JAG}.

Let $S$ be a current in $\mathcal{D}_s$. We recall the norm
$$\|S\|_*:=\min\{\|S^+\|+\|S^-\|\},$$
where the minimum is taken over all positive closed $(s,s)$-currents $S^\pm$ such that $S=S^+-S^-$.

\smallskip

We define the following topology on $\mathcal{D}_s$: given a sequence of currents $(S_n)_{n\ge0}$ and a current $S$, we say that the $S_n$'s \emph{$*$-converge} to $S$ (or simply ``converge'') if they converge in the sense of currents and they are $\|\cdot\|_*$-bounded. We call this topology the \emph{$*$-topology}. Smooth forms are dense in $\mathcal{D}_s$ and $\mathcal{D}^0_s$ for the $*$-topology, see \cite[Theorem 2.4.4]{DS10JAG}.

\medskip

Since $X$ is compact, we can fix a finite atlas. For every $l\ge0$, we consider the standard $\|\cdot\|_{\mathcal{C}^l}$ norm of forms associated to that atlas. Different choices of atlases give equivalent norms. Therefore, we can fix an atlas, and hereafter every dependence on it will be omitted. When talking about forms with bounded measurable coefficients, we will use the norm $\|\cdot\|_\infty$, which coincides with $\|\cdot\|_{\mathcal{C}^0}$ for continuous forms.

By duality, we can consider the following norms on $\mathcal{D}_s$.

\begin{defn}
    Given a current $S$ in $\mathcal{D}_s$ and $l>0$, we define
    $$\|S\|_{-l}:=\sup\{|\langle S,\Omega\rangle|\mid \Omega\text{ is a smooth }(k-s,k-s)\text{-form with }\|\Omega\|_{\mathcal{C}^l}\le1\}.$$

    The norm $\|\cdot\|_{-l}$ induces a distance $\dist_l$ given by $\dist_l(S,S'):=\|S-S'\|_{-l}$.
\end{defn}

The following result is obtained using a standard interpolation between Banach spaces \cite{T78Book}, see for instance \cite[Proposition 2.2.1]{DS10JAG}.

\begin{prop}\label{interpol}
    Let $l$ and $l'$ be real numbers with $0<l<l'$. Then, on any $\|\cdot\|_*$-bounded subset of $\mathcal{D}_p$, the topologies induced by $\dist_l$ and by $\dist_{l'}$ coincide with the weak topology and the $*$-topology. Moreover, for every $\|\cdot\|_*$-bounded subset of $\mathcal{D}_p$, there is a constant $c_{l,l'} > 0$ such that
    $$\dist_{l'} \le \dist_l \le c_{l,l'}(\dist_{l'})^{l/l'}.$$
\end{prop}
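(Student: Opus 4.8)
The plan is to prove the two assertions separately: the quantitative interpolation inequality by a regularization-and-optimization argument, and the coincidence of the four topologies on $\|\cdot\|_*$-bounded subsets by a compactness argument. Throughout I fix a bounded subset, say $B_M:=\{S\in\mathcal{D}_p : \|S\|_*\le M\}$. The left-hand inequality $\dist_{l'}\le\dist_l$ comes for free from the definition: since $l<l'$ a smooth form with $\|\Omega\|_{\mathcal{C}^{l'}}\le1$ also satisfies $\|\Omega\|_{\mathcal{C}^l}\le1$, so the admissible set for $\|\cdot\|_{-l'}$ sits inside that for $\|\cdot\|_{-l}$, and passing to the supremum over the smaller set can only decrease it.

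For the right-hand inequality I would fix $S\in B_M$, write $S=S^+-S^-$ with $\|S^+\|+\|S^-\|$ close to $\|S\|_*$, and take a test form $\Omega$ with $\|\Omega\|_{\mathcal{C}^l}\le1$. Regularizing $\Omega$ at scale $\ep$ in the charts of the fixed atlas (convolution plus a partition of unity, then projecting onto the $(k-p,k-p)$-component, which is all that pairs with $S$) produces a smooth form $\Omega_\ep$ obeying the two standard estimates
\[
\|\Omega-\Omega_\ep\|_\infty\lesssim\ep^{l},\qquad \|\Omega_\ep\|_{\mathcal{C}^{l'}}\lesssim\ep^{\,l-l'},
\]
with constants depending only on $l,l'$ and the atlas. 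Splitting $\langle S,\Omega\rangle=\langle S,\Omega-\Omega_\ep\rangle+\langle S,\Omega_\ep\rangle$, the first term is at most $(\|S^+\|+\|S^-\|)\,\|\Omega-\Omega_\ep\|_\infty\lesssim M\ep^{l}$ and the second is at most $\|S\|_{-l'}\,\|\Omega_\ep\|_{\mathcal{C}^{l'}}\lesssim\|S\|_{-l'}\,\ep^{\,l-l'}$. Choosing $\ep=(\|S\|_{-l'}/M)^{1/l'}$ balances the two and gives $|\langle S,\Omega\rangle|\lesssim M^{1-l/l'}\|S\|_{-l'}^{\,l/l'}$; taking the supremum over $\Omega$ yields $\|S\|_{-l}\lesssim M^{1-l/l'}\|S\|_{-l'}^{\,l/l'}$, i.e. the claim with $c_{l,l'}$ absorbing $M$. (If the optimal $\ep$ exceeds the scale on which the regularization is valid, then $\|S\|_{-l'}\gtrsim M$ and, using the trivial bound $\|S\|_{-l}\lesssim M$, the inequality holds after enlarging the constant; so one may simply cap $\ep$.)

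For the topologies, note first that on $B_M$ the $*$-topology is the weak topology, since the mass boundedness demanded by $*$-convergence is automatic there. For any fixed index $m>0$, convergence in $\dist_m$ implies weak convergence, by testing against a fixed smooth form after normalizing its $\mathcal{C}^m$-norm. The substantive converse is: if $S_n\rightharpoonup S$ with $\|S_n\|_*\le M$ then $\dist_m(S_n,S)\to0$. Here, since $m>0$, the unit ball $B_m$ of $\mathcal{C}^m$ is uniformly bounded and equicontinuous, hence precompact in the sup-norm by Arzel\`a--Ascoli; I pick a finite $\eta$-net $\{\Omega_j\}\subset B_m$ for $\|\cdot\|_\infty$. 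Given $\Omega\in B_m$, choosing $\Omega_j$ with $\|\Omega-\Omega_j\|_\infty\le\eta$ and splitting $\langle S_n-S,\Omega\rangle=\langle S_n-S,\Omega-\Omega_j\rangle+\langle S_n-S,\Omega_j\rangle$ bounds the first term by $(M+\|S\|_*)\eta$ and sends the second to $0$ as $n\to\infty$ for each of the finitely many $j$. Letting $n\to\infty$ and then $\eta\to0$ gives $\dist_m(S_n,S)\to0$; applying this with $m=l$ and $m=l'$ shows both distances metrize the weak topology on $B_M$.

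I expect the main obstacle to be the regularization step on the manifold: making the two scale estimates precise for non-integer $l,l'$, handling the type projection, and keeping the constants uniform over the atlas and independent of $S$. This is exactly the content of the standard Banach-space interpolation theory cited, and once the two estimates are in hand the optimization and the compactness argument are routine.
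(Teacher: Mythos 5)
Your proof is correct, and it is worth noting that it is \emph{more} self-contained than the paper's treatment: the paper gives no argument at all for this proposition, instead citing standard interpolation theory between Banach spaces (Triebel) and Proposition 2.2.1 of Dinh--Sibony's super-potentials paper, where the inequality is obtained from abstract interpolation of the scale of $\mathcal{C}^m$ spaces. You replace that abstract machinery with the concrete mechanism underlying it: mollification at scale $\ep$ with the two estimates $\|\Omega-\Omega_\ep\|_\infty\lesssim\ep^{l}$ and $\|\Omega_\ep\|_{\mathcal{C}^{l'}}\lesssim\ep^{l-l'}$, followed by optimization in $\ep$, plus an Arzel\`a--Ascoli compactness argument for the coincidence of topologies. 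What the citation buys is brevity and automatic handling of non-integer exponents (vanishing-moment mollifiers are needed for $l>1$, as you anticipate); what your argument buys is transparency, in particular making visible that $c_{l,l'}$ depends on the mass bound $M$ of the bounded set exactly as the statement allows, and that the exponent $l/l'$ arises from balancing the two mollification estimates. Two small points to tighten. First, in the compactness step your finite $\eta$-net $\{\Omega_j\}$ should be chosen among \emph{smooth} forms (which is possible, since the smooth forms in the $\mathcal{C}^m$-unit ball are themselves totally bounded in $\|\cdot\|_\infty$ and a totally bounded set admits a net consisting of its own points): weak convergence of currents is tested against smooth forms, so $\langle S_n-S,\Omega_j\rangle\to0$ is not immediate for a merely $\mathcal{C}^m$ net element. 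Second, since the proposition asserts coincidence of \emph{topologies} and not only of convergent sequences, phrase the net argument in terms of neighborhoods: for fixed $S$ and $\eta>0$, the weak neighborhood $\{S':|\langle S'-S,\Omega_j\rangle|<\eta\ \forall j\}$ intersected with the bounded set is contained in a $\dist_m$-ball of radius $O(M\eta)$, and conversely a $\dist_m$-ball is contained in any prescribed weak neighborhood after normalizing the finitely many test forms; your splitting gives exactly this, so no metrizability of the weak topology needs to be invoked.
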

\vspace*{-\baselineskip}

In the sequel, we will need the following result.

\begin{lm}\label{star_lip}
    Let $\tau:X_1\rightarrow X_2$ be a holomorphic map between two compact K\"ahler manifolds of dimensions $k_1$ and $k_2$, respectively. Then the map $\tau_*:\cD_{k_1-s}(X_1)\rightarrow \cD_{k_2-s}(X_2)$ is Lipschitz with respect to $\dist_2$ for every $0\le s\le \min\{k_1,k_2\}$.
\end{lm}

\begin{proof}
    Let $S$ be a current in $\mathcal{D}_{k_1-s}(X_1)$. We need to show that we have $\|\tau_*(S)\|_{-2}\le C\|S\|_{-2}$ for some constant $C>0$ depending only on $\tau$. Fix a smooth $(s,s)$-form $\Omega$ on $X$ with $\|\Omega\|_{\mathcal{C}^2}\le1$. It suffices to show that we have $|\langle \tau_*(S),\Omega\rangle|\le C\|S\|_{-2}$. We have
    $$|\langle \tau_*(S),\Omega\rangle|=|\langle S,\tau^*(\Omega)\rangle| \le \|S\|_{-2}\|\tau^*(\Omega)\|_{\mathcal{C}^2}.$$
    Since $\tau$ is smooth and $\|\Omega\|_{\mathcal{C}^2}\le1$, it follows that $\|\tau^*(\Omega)\|_{\mathcal{C}^2}$ is bounded by a constant that depends only on $\tau$. This completes the proof.
\end{proof}

\subsection{Plurisubharmonic and d.s.h.\ functions} \label{psh_and_dsh}

Take an open subset $U$ of $X$. We recall that a function $u:U\longrightarrow\mathbb{R} \cup \{-\infty\}$, not identically $-\infty$ on any connected component of $U$, is called \textit{plurisubharmonic}  (\textit{p.s.h.}\ for short) if it is upper semi-continuous and, for every holomorphic function ${\tau:\mathbb{D}\longrightarrow U}$, we have that $u\circ\tau$ is subharmonic or identically $-\infty$ on $\mathbb{D}$. A function $u:X\longrightarrow\mathbb{R} \cup \{-\infty\}$ is called \textit{quasi-plurisubharmo\-nic}  (\textit{quasi-p.s.h.}\ for short) if it is locally the difference of a p.s.h.\ function and a smooth one. The space of quasi-p.s.h.\ functions is larger than that of p.s.h.\ functions, which on a compact manifold are necessarily constant by the maximum principle. On the other hand, quasi-p.s.h.\ functions still preserve the main compactness properties of p.s.h.\ functions.

\smallskip

A function $u:X\longrightarrow\mathbb{R} \cup\{\pm \infty\}$ is \textit{d.s.h.}\ if it is the difference of two quasi-p.s.h.\ functions outside of a pluripolar set. Since their introduction in \cite{DS06CPAM,DS06CMH}, d.s.h.\ functions have played a central role in holomorphic dynamics. We denote by $\text{DSH}(X)$ the space of d.s.h.\ functions on $X$. If $u$ is d.s.h., there are two positive closed $(1,1)$-currents $R^\pm$ on $X$ such that $\ddc u=R^+-R^-$. As these two currents are cohomologous, they have the same mass. We define a norm on $\text{DSH}(X)$ by 
$$\|u\|_\text{DSH}:= |\langle\omega^k,u\rangle|+\frac{\|\ddc u\|_*}{2}.$$

Notice that $u\mapsto\|\ddc u\|_*$ is already a seminorm for functions. We obtain a norm equivalent to $\|\cdot\|_\text{DSH}$ if $|\langle\omega^k,u\rangle|$ is replaced by $\|u\|_{L^1(\omega^k)}$ or by $|\langle\nu,u\rangle|$, where $\nu$ is any \textit{PB} measure, i.e., such that all d.s.h.\ functions are integrable with respect to $\nu$.

\subsection{Superpotentials of currents}\label{superpots}

We now recall the definition of super-potentials of currents and review some basic facts about them. We refer to \cite[Section 2]{DS10CM} and \cite[Section 3.2]{DS10JAG}, see also \cite[Section 2]{BD26AM}.

\medskip

Given a current $S\in\mathcal{D}_{k-s+1}^0$, $0<s\le k$, there exists a real $(k-s,k-s)$-current $U_S$ such that $\ddc U_S=S$. We can choose $U_S$ smooth if $S$ is smooth. We can also \textit{normalize} $U_S$ as follows: for every $0\le s\le k$, we fix a family $(\alpha_{s,1},\dots,\alpha_{s,h_s})$ of real smooth closed $(s,s)$-forms, with $h_s:=\dim{H^{s,s}(X,\mathbb{R})}$, such that the family of the cohomology classes $\{\alpha_{s,1}\},\dots,\{\alpha_{s,h_s}\}$ are a basis of $H^{s,s}(X,\mathbb{R})$. From now on, every dependence on the choice of the $\alpha_{j,l}$'s will be omitted. We can add to $U_S$ a suitable real smooth closed form so that $\langle U_S, \alpha_{s,l}\rangle=0$ for every $1\le l\le h_s$. In this case, we say that $U_S$ is \textit{normalized}. Such a current $U_S$ is called a \textit{potential} of $S$, and we will always have it normalized unless otherwise stated. Observe that, once the $\alpha_{j,l}$'s are fixed, a canonical way to obtain a normalization is to use Poincaré duality, see for instance \cite{DS10JAG}.

\begin{defn}
    Take currents $R\in\mathcal{D}_s$ and $S\in\mathcal{D}_{k-s+1}^0$. If either $R$ or $S$ is smooth, define the \textit{(normalized) super-potential of $R$} evaluated at $S$ as
    $$\mathcal{U}_R(S):=\langle R,U_S\rangle.$$
\end{defn}
\vspace*{-\baselineskip}

The value of the super-potential of every current in $\mathcal{D}_s$ is always defined at every smooth current ${S\in\mathcal{D}_{k-s+1}^0}$, and does not depend on the choice of $U_S$  \cite[Lemma 3.2.1]{DS10JAG}. Thus, the super-potential is a linear functional on the space of smooth exact currents.

\begin{defn}
    Take a current $R\in\mathcal{D}_s$. We say that $R$ has \textit{continuous super-potential} if $\mathcal{U}_R$ extends to a linear functional defined on all of $\mathcal{D}_{k-s+1}^0$ which is continuous with respect to the $*$-topology.
\end{defn}

Smooth currents always have continuous super-potentials, see \cite[Section 3.4]{DS10JAG} and Lemma \ref{supofsmooth} for a more precise version of this fact.

\smallskip

We highlight the following property of super-potentials. Although elementary, we will use it several times in the sequel.

\begin{lm} \label{pb-commute}
    Let $f:X\longrightarrow X$ be a holomorphic function. Let $R$ be a current in $\mathcal{D}^0_{k-s+1}$ and $S$ a smooth current in $\mathcal{D}^0_s$. Then we have
    $$\mathcal{U}_{f_*(R)}(S)=\mathcal{U}_R\big(f^*(S)\big).$$
    
    If $\mathcal{U}_R$ is continuous, then the equality extends to all currents $S$ in $\mathcal{D}^0_s$.
\end{lm}

\begin{proof}
    We have
    $$\mathcal{U}_{f_*(R)}(S)=\left\langle f_*(R), U_S\right\rangle=\left\langle R, f^*(U_S)\right\rangle.$$
    Since $R$ is in $\mathcal{D}^0_{k-s+1}$ and we have $\ddc f^*(U_S)=f^*(\ddc U_S)=f^*(S)=\ddc U_{f^*(S)}$, we get
    $$\left\langle R, f^*(U_S)\right\rangle=\left\langle R, U_{f^*(S)}\right\rangle=\mathcal{U}_R\big(f^*(S)\big),$$
    which proves the first assertion.

    Since $f^*$ is continuous, if $\mathcal{U}_R$ is continuous the equality above extends to all currents $S$ in $\mathcal{D}^0_s$. This proves the second assertion and completes the proof.
\end{proof}

In the sequel, we will also make use of the following result, see for instance \cite[Section 2]{DS10CM}.

\begin{lm} \label{shuffle}
    Let $R$ be a current in $\mathcal{D}^0_{k-s+1}$ and $S$ a smooth current in $\mathcal{D}^0_s$. We have
    $$\cU_R(S)=\cU_S(R).$$

    If $\mathcal{U}_R$ is continuous, then the equality extends to all currents $S$ in $\mathcal{D}^0_s$.
\end{lm}

\section{Currents with log-H\"older-continuous super-potentials}\label{techs}
In this section we introduce the notion of $\log$-H\"older-continuous super-potential and we study the space of currents with this property.

\subsection{Regular super-potentials} \label{reg_superpots}

In the sequel we will work with super-potentials and forms which are more than just continuous. We now define the two main type of regularities that we are interested in: \textit{H\"older} and \textit{$\log$-H\"older}.

Given a monotone and continuous function $w:[0,1)\longrightarrow[0,+\infty)$, called \textit{modulus of continuity}, we say that a function $f$ between two compact metric spaces $(Y_1,d_1)$ and $(Y_2,d_2)$ is \textit{$w$-continuous} if it admits $w$ as modulus of continuity, i.e., if we have
$$d_2\big(f(x),f(y)\big)\le w\big(d_1(x,y)\big)\quad\text{for every}\quad x,y\in Y_1\quad\text{with}\quad d_1(x,y)<1.$$

When we consider a function whose domain is (a subset of) $Y_1=\mathcal{D}_s^1:=\mathcal{D}_s\cap\{\|S\|_*\le1\}$, the modulus of continuity will always be used in the case $d_1=\dist_2$.

\begin{defn} \label{holdlog}
    For $C,\lambda,M,q>0$, put
    $$w^{C,\lambda}(\delta):=C\delta^\lambda\qquad\text{and}\qquad w_{M,q}(\delta):=\dfrac{M}{(1+|\log{\delta}|)^q}.$$
    We say that a function between two metric spaces is:
    \begin{enumerate}[label={(\arabic*)}]
        \item \textit{$(C,\lambda)$-H\"older-continuous} if it is $w^{C,\lambda}$-continuous;
        \item \textit{$\lambda$-H\"older-continuous} if it is $(C,\lambda)$-H\"older-continuous for some $C>0$;
        \item \textit{H\"older-continuous} if it is $\lambda$-H\"older-continuous for some $\lambda>0$;
        \item \textit{$(M,q)$-$\log$-H\"older-continuous} (or \textit{$(M,q)$-$\log$-continuous}) if it is $w_{M,q}$-continuous;
        \item \textit{$q$-$\log$-H\"older-continuous} (or \textit{$\log^q$-continuous}) if it is $(M,q)$-$\log$-continuous for some $M>0$;
        \item \textit{$\log$-H\"older-continuous} (or \textit{$\log$-continuous}) if it is $q$-$\log$-continuous for some $q>0$.
    \end{enumerate}

    When considering a form $\Phi$, we say that it is $(C,\lambda)$-H\"older-continuous if all its coefficients in all the charts are $(C,\lambda)$-H\"older-continuous. We define analogously $(M,q)$-$\log$-H\"older-continuous forms.
\end{defn}

For $\log^q$-continuous forms, we have the following norm.

\begin{defn}
    Given a $\log^q$-continuous form $\Phi$ on $X$, we define
    $$\|\Phi\|_{\log^q}:=\|\Phi\|_\infty+\inf\{M>0\mid\Phi\text{ is }(M,q)\text{-}\log\text{-continuous}\}.$$
\end{defn}
\vspace*{-\baselineskip}

The following norm is defined in \cite[Section 3.2]{BD24GAFA} in the case of functions. We extend it to forms of any bidegree.

\begin{defn}\label{def-logq-form}
    Given a $\log^q$-continuous form $\Phi$ on $X$ with $\|\ddc\Phi\|_*$ bounded, we define
    $$\|\Phi\|_q:=\|\Phi\|_{\log^q}+\|\ddc\Phi\|_*.$$
\end{defn}
\vspace*{-\baselineskip}

Observe that, since $\log^q$-continuity is preserved under changes of charts, up to norm equivalence the definitions of $\|\cdot\|_{\log^q}$ and $\|\cdot\|_q$ are independent of the choice of atlas.

\begin{defn}
    Take $R\in\mathcal{D}_s$. Given a modulus of continuity $w$, we say that $\mathcal{U}_R$ is \textit{$w$-continuous} if it is continuous and we have
    $$|\mathcal{U}_R(S)|\le w(\|S\|_{-2})\qquad\text{for every }S\in\mathcal{D}^0_{k-s+1}\cap \mathcal{D}^1_{k-s+1}.$$
\end{defn}
\vspace*{-\baselineskip}

\begin{rmk} \label{dense}
    Recall that smooth forms are dense in $\mathcal{D}_{k-s+1}^0$ for the $*$-topology. So, since a uniformly continuous function can be uniquely extended as a continuous function to the closure of its domain and the extension still has the same modulus of continuity, one can test for $w$-continuity of super-potentials just on smooth forms. This will automatically give the $w$-continuous extension to all of $\mathcal{D}_{k-s+1}^0\cap\mathcal{D}_{k-s+1}^1$.
\end{rmk}

For smooth forms we have the following result, see for instance \cite[Section 3.4]{DS10JAG}, and \cite[Lemma 2.8 (i)]{BD26AM} for a detailed proof.

\begin{lm} \label{supofsmooth}
    Let $\Phi$ be a smooth $(s,s)$-form on $X$. Then its super-potential is $(C,1)$-H\"older-continuous for some $0<C\lesssim\|\Phi\|_{\cC^2}$, where the implicit constant depends only on the choice made for the normalization of potentials.
\end{lm}

We also have the following result about the super-potential of the $\ddc$ of a H\"older-continuous form.

\begin{lm} \label{hold_sup_hold}
    Let $\Phi$ be a $\lambda$-H\"older-continuous $(s,s)$-form for some $\lambda>0$. The super-potential of $\ddc\Phi$ is $(C,\min\{1,\lambda/2\})$-H\"older-continuous for some $0<C\lesssim\|\Phi\|_{\cC^\lambda}$, where the implicit constant may depend on $\lambda$, but is independent of $\Phi$.

    Moreover, for every current $S\in\cD^0_{k-s}\cap\cD^1_{k-s}$ the value of $\cU_{\ddc\Phi}(S)$ is given by $\langle S,\Phi\rangle$.
\end{lm}

\begin{proof}
    Take a current $S$ in $\cD^0_{k-s}\cap\cD^1_{k-s}$. By definition, we have $|\langle S,\Phi\rangle|\le \|\Phi\|_{\cC^\lambda}\|S\|_{-\lambda}$. Applying Proposition \ref{interpol}, we get
    $$|\langle S,\Phi\rangle|\lesssim \|\Phi\|_{\cC^\lambda}\|S\|_{-2}^{\min\{1,\lambda/2\}},$$
    where the implicit constant depends only on $\lambda$. In order to conclude, it remains to prove that $\cU_{\ddc\Phi}(S)$ is indeed equal to $\langle S,\Phi\rangle$. By Remark \ref{dense}, we just need to prove it for $S$ smooth.

    Take $S$ smooth, and let $U_S$ be a smooth normalized potential of $S$. We have
    $$\cU_{\ddc\Phi}(S)=\langle \ddc\Phi, U_S\rangle=\langle \Phi,\ddc U_S\rangle=\langle \Phi, S\rangle.$$
    The proof is complete.
\end{proof}

We now study the regularity of the super-potential of the $\ddc$ of a $\log$-H\"older-continuous form.

\begin{lm} \label{logq_test}
    Let $\Phi$ be a $\log^q$-continuous $(s,s)$-form on $X$ for some $q>0$. Then the function
    \begin{equation}\label{acting_on_currents}
        S \longmapsto |\langle S,\Phi\rangle|
    \end{equation}
    is $(M,q)$-$\log$-continuous in $\mathcal{D}_{k-s}^1$ for some $0<M\lesssim \|\Phi\|_{\log^q}$, where the implicit constant may depend on $q$, but is independent of $\Phi$.
\end{lm}

\begin{proof}
    Recall that we fixed an atlas on $X$. Let $\varphi_1,\dots,\varphi_N$ be a partition of unity associated to that atlas. Then we have $\langle S,\Phi\rangle=\displaystyle\sum_{j=1}^N\langle S,\Phi\varphi_j\rangle$, and the fact that $\varphi_j$ is smooth for every $j$ implies that $\Phi\varphi_j$ is $(M',q)$-$\log$-continuous for some $0<M'\lesssim \|\Phi\|_{\log^q}$, where the implicit constant depends only on $q$. We also have
    $$|\langle S,\Phi\rangle|\le\sum_{j=1}^N|\langle S,\Phi\varphi_j\rangle|.$$
    It then suffices to estimate $|\langle S,\Phi\varphi\rangle|$ for one fixed $\varphi=\varphi_j$ with compact support in a chart $U$, which we identify as an open subset of $\mathbb{C}^k$. Put $\Omega=\Phi\varphi$ and, for every $z\in\C^k$ and $\theta\in\C$ with $|\theta|\le1$, define
    $$\Omega_\theta(z):=\int_{a\in\mathbb{B}}\Omega(z+r\theta a)\rho(a)\di a,$$
    where $\mathbb{B}$ is the unit ball of $\mathbb{C}^k$ and $\rho$ is a fixed smooth function with compact support in $\mathbb{B}$ such that $\int\rho(a)\di a=1$. The integral above is computed separately for each coefficient of $\Omega$. We can choose $r<1$ sufficiently small, so that for all $|\theta|\le1$ the support of $\Omega_\theta$ is contained in a compact subset of $U$. Hence, we can extend $\Omega_\theta$ to $0$ on $X\setminus U$.\\
    
    \textbf{Claim 1.} We have
    $$\|\Omega_\theta\|_{\mathcal{C}^2}\lesssim \|\Omega_\theta\|_{\mathcal{C}^2(U)}\lesssim \|\Omega\|_{\mathcal{C}^0}|\theta|^{-2}\quad\text{and}\quad\|\Omega\|_{\mathcal{C}^0}\lesssim\|\Phi\|_\infty \le\|\Phi\|_{\log^q}.$$
    \begin{proof}
        The second chain of inequalities is clear. For the first one, the first inequality follows from a change of variables  between charts, and the implicit constant depends only on $X$. For the second inequality, let $z=(x_1+iy_1,\dots,x_k+iy_k)$ be the coordinates in $\C^k$. We observe that for every $j_1,l_1,\dots,j_k,l_k\in\N$ we have
        \begin{align}\label{changov}
            \frac{\de^{j_1+l_1+\dots+j_k+l_k}}{\de_{x_1}^{j_1}\de_{y_1}^{l_1}\dots\de_{x_k}^{j_k}\de_{y_k}^{l_k}}&\Omega_\theta(z)\\ \nonumber
            &=\frac{\de^{j_1+l_1+\dots+j_k+l_k}}{\de_{x_1}^{j_1}\de_{y_1}^{l_1}\dots\de_{x_k}^{j_k}\de_{y_k}^{l_k}}\int_{a\in\C^k} \Omega(z+r\theta a)\rho(a)\di a\\ \nonumber
            &=\frac{\de^{j_1+l_1+\dots+j_k+l_k}}{\de_{x_1}^{j_1}\de_{y_1}^{l_1}\dots\de_{x_k}^{j_k}\de_{y_k}^{l_k}}|r\theta|^{-2k}\int_{w\in\C^k} \Omega(w)\rho\left(\frac{w-z}{r\theta}\right)\di w\\ \nonumber
            &=|r\theta|^{-2k-(j_1+l_1+\dots+j_k+l_k)}\int_{w\in\C^k} \Omega(w)\left(\frac{\de^{j_1+l_1+\dots+j_k+l_k}}{\de_{x_1}^{j_1}\de_{y_1}^{l_1}\dots\de_{x_k}^{j_k}\de_{y_k}^{l_k}}\rho\right)\left(\frac{w-z}{r\theta}\right)\di w,
        \end{align}
        where in the second equality we made the change of variables $w=z+r\theta a$. From \eqref{changov} and the fact that $\rho$ is supported in $\B$ we get
        $$\|\Omega_\theta\|_{\cC^N(U)}\lesssim|r\theta|^{-2k-N}\cdot|r\theta|^{2k}\cdot\|\rho\|_{\cC^N}\cdot\|\Omega\|_{\cC^0}=|r\theta|^{-N}\cdot\|\rho\|_{\cC^N}\cdot\|\Omega\|_{\cC^0}$$
        for every $N\ge1$, where the implicit constant depends only on $k$. The assertion then follows by taking $N=2$.
    \end{proof}
    
    For every $|\theta|\le1$, we have
    \begin{equation} \label{logqtest-claim0}
        |\langle S,\Omega\rangle|\le|\langle S,\Omega_\theta\rangle|+|\langle S,\Omega-\Omega_\theta\rangle|.
    \end{equation}
    We now bound the two terms in the right hand side of the above expression.\\

    \textbf{Claim 2.} We have
    $$|\langle S,\Omega_\theta\rangle|\lesssim\|\Phi\|_{\log^q}|\theta|^{-2}\|S\|_{-2}.$$
    \begin{proof}
        Since $\Omega_\theta$ is smooth, from the definition of $\|\cdot\|_{-2}$ and Claim 1 it follows that
        $$|\langle S,\Omega_\theta\rangle|=\|\Omega_\theta\|_{\mathcal{C}^2}\cdot\left|\left\langle S,\frac{\Omega_\theta}{\|\Omega_\theta\|_{\mathcal{C}^2}}\right\rangle\right| \le \|\Omega_\theta\|_{\mathcal{C}^2}\|S\|_{-2}\lesssim\|\Phi\|_{\log^q}|\theta|^{-2}\|S\|_{-2}.$$
        This proves the desired inequality.
    \end{proof}

    \textbf{Claim 3.} We have
    $$|\langle S,\Omega-\Omega_\theta\rangle|\lesssim \frac{\|\Phi\|_{\log^q}}{(1+|\log{|\theta|}|)^q}.$$
    
    \begin{proof}
        We have to prove the inequality only for currents $S$ such that $\|S\|_*\le1$.

        Since $\Omega$ is $(M',q)$-$\log$-continuous and we are regularizing using translations, we have
        $$\|\Omega-\Omega_\theta\|_{\mathcal{C}^0}\lesssim \frac{M'}{(1+|\log{|\theta|}|)^q} \lesssim \frac{\|\Phi\|_{\log^q}}{(1+|\log{|\theta|}|)^q},$$
        where the implicit constants depend only on $\rho$ and $q$. It follows that we have
        $$|\Omega-\Omega_\theta|\lesssim \frac{\|\Phi\|_{\log^q}}{(1+|\log{|\theta|}|)^q}\omega^s,$$
        where we recall that $\omega$ is the K\"ahler form on $X$. Given a current $S$ with $\|S\|_*\le1$, write $S=S^+-S^-$, with $S^\pm$ positive currents with $\|S^\pm\|\le1$. We then have
        \begin{align*}
            |\langle S,\Omega-\Omega_\theta\rangle|&=|\langle S^+-S^-,\Omega-\Omega_\theta\rangle|\lesssim |\langle S^+,\Omega-\Omega_\theta\rangle|+|\langle S^-,\Omega-\Omega_\theta\rangle|\\
            &\lesssim \frac{\|\Phi\|_{\log^q}}{(1+|\log{|\theta|}|)^q}(|\langle S^+,\omega^s\rangle|+|\langle S^-,\omega^s\rangle|) \lesssim \frac{\|\Phi\|_{\log^q}}{(1+|\log{|\theta|}|)^q}.
        \end{align*}
        All the constants here are independent of $\Phi$. This proves the desired estimate.
    \end{proof}
    
    Combining \eqref{logqtest-claim0} with the estimates in Claims 2 and 3, we obtain
    $$|\langle S,\Omega\rangle| \lesssim \|\Phi\|_{\log^q}|\theta|^{-2}\|S\|_{-2}+\frac{\|\Phi\|_{\log^q}}{(1+|\log{|\theta|}|)^q}.$$
    Choosing $\theta=\|S\|_{-2}^{\frac{1}{4}}$ we get
    \begin{align*}
        \|\Phi\|_{\log^q}|\theta|^{-2}\|S\|_{-2}+\frac{\|\Phi\|_{\log^q}}{(1+|\log{|\theta|}|)^q}&=\|\Phi\|_{\log^q}\|S\|_{-2}^{1/2}+\frac{\|\Phi\|_{\log^q}}{(1+(1/4)|\log{\|S\|_{-2}}|)^q}\\
        &\lesssim \frac{\|\Phi\|_{\log^q}}{(1+|\log{\|S\|_{-2}}|)^q},
    \end{align*}
    where the implicit constant depends only on $q$. This completes the proof.
\end{proof}

\begin{cor} \label{logq_sup_logq}
    Let $\Phi$ be a $\log^q$-continuous $(s,s)$-form on $X$ for some $q>0$. The super-potential of $\ddc\Phi$ is $(M,q)$-$\log$-continuous for some $0<M\lesssim \|\Phi\|_{\log^q}$, where the implicit constant may depend on $q$, but is independent of $\Phi$.

    Moreover, for every current $S\in\cD^0_{k-s}\cap\cD^1_{k-s}$ the value of $\cU_{\ddc\Phi}(S)$ is given by $\langle S,\Phi\rangle$.
\end{cor}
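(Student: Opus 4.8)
The plan is to obtain the statement as a direct combination of Lemma \ref{logq_test} with the integration-by-parts identity already exploited in the proof of Lemma \ref{hold_sup_hold}: the corollary is to the $\log^q$ setting exactly what Lemma \ref{hold_sup_hold} is to the H\"older one. The first thing to record is a bidegree check: since $\Phi$ is an $(s,s)$-form, the current $\ddc\Phi$ has bidegree $(s+1,s+1)$, so its super-potential is evaluated on currents $S\in\cD^0_{k-s}$, which is precisely the bidegree for which Lemma \ref{logq_test} controls the pairing $S\mapsto|\langle S,\Phi\rangle|$ on $\cD^1_{k-s}$.

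First I would reduce, via Remark \ref{dense}, to smooth currents $S\in\cD^0_{k-s}\cap\cD^1_{k-s}$, and identify the value of the super-potential there. Choosing a smooth normalized potential $U_S$, so that $\ddc U_S=S$, and using the self-adjointness of $\ddc$ against the smooth form $U_S$, I would compute, exactly as in Lemma \ref{hold_sup_hold},
$$\cU_{\ddc\Phi}(S)=\langle\ddc\Phi,U_S\rangle=\langle\Phi,\ddc U_S\rangle=\langle\Phi,S\rangle.$$
Then I would apply Lemma \ref{logq_test} in bidegree $k-s$ to produce a constant $0<M\lesssim\|\Phi\|_{\log^q}$, depending only on $q$, with $|\langle S,\Phi\rangle|\le w_{M,q}(\|S\|_{-2})$; combined with the identity above this yields $|\cU_{\ddc\Phi}(S)|\le w_{M,q}(\|S\|_{-2})$ on smooth exact currents, i.e.\ the $(M,q)$-$\log$-continuity bound tested on a dense set.

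Finally I would pass to the extension. By Remark \ref{dense} the functional $\cU_{\ddc\Phi}$ extends uniquely to a $w_{M,q}$-continuous functional on all of $\cD^0_{k-s}\cap\cD^1_{k-s}$. To check that the equality $\cU_{\ddc\Phi}(S)=\langle S,\Phi\rangle$ survives this extension, I would note that $\Phi$ is continuous, so $S\mapsto\langle S,\Phi\rangle$ is continuous for the weak topology on $\|\cdot\|_*$-bounded sets and hence for $\dist_2$ by Proposition \ref{interpol}; two continuous functionals agreeing on the dense set of smooth currents must coincide, giving the ``moreover'' part. I expect essentially no analytic obstacle here, since all the genuine estimates are carried out inside Lemma \ref{logq_test}; the only points requiring a little care are the bidegree bookkeeping that aligns $\ddc\Phi$ with $\cD^0_{k-s}$ and the routine density/extension step.
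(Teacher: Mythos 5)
Your proposal is correct and follows essentially the same route as the paper: the identity $\cU_{\ddc\Phi}(S)=\langle\ddc\Phi,U_S\rangle=\langle\Phi,\ddc U_S\rangle=\langle\Phi,S\rangle$ for smooth $S$ with smooth normalized potential, combined with Lemma \ref{logq_test} for the $(M,q)$-$\log$-continuity bound and Remark \ref{dense} for the extension by density. The only difference is that you spell out the extension step (uniqueness of the continuous extension and the verification that it still equals $\langle\cdot,\Phi\rangle$), which the paper leaves implicit; this is a harmless and correct elaboration.
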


\begin{proof}
    Take a current $S$ in $\cD^0_{k-s}\cap\cD^1_{k-s}$ with normalized potential $U_S$. We have
    $$\cU_{\ddc\Phi}(S)=\langle \ddc\Phi, U_S\rangle=\langle \Phi,\ddc U_S\rangle=\langle \Phi, S\rangle.$$
    The above chain of equalities is valid when $S$ is smooth and we take $U_S$ smooth. The result then follows from Lemma \ref{logq_test} and Remark \ref{dense}. The proof is complete.
\end{proof}

We will also need the two following results about currents with regular super-potentials, see \cite[Proposition 3.4.2]{DS10JAG} and \cite[Theorem 1.1]{DNV18AM}.

\begin{prop} \label{DS10JAG-3_4_2}
    Take $S_1\in\mathcal{D}_{p_1}$ and $S_2\in\mathcal{D}_{p_2}$ with $p_1+p_2\le k$. Suppose $S_1$ has a $\lambda_1$-H\"older-continuous super-potential for some $\lambda_1>0$. Suppose also that $S_2$ has a $\lambda_2$-H\"older-continuous super-potential for some $\lambda_2>0$ or, respectively, a $\log^q$-continuous super-potential for some $q>0$. Then $S_1\wedge S_2$ has a $\lambda$-H\"older-continuous super-potential, where $\lambda=\min\{\lambda_1,1\}\cdot\min\{\lambda_2,1\}/2$, or, respectively, a $\log^q$-continuous super-potential.
\end{prop}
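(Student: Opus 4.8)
The plan is to reduce the statement to a single regularity-transfer estimate for $S_1$, combined with the modulus of continuity of $\mathcal{U}_{S_2}$, by exploiting the identity
$$\mathcal{U}_{S_1\wedge S_2}(R)=\mathcal{U}_{S_2}(S_1\wedge R),\qquad R\in\mathcal{D}^0_{k-p_1-p_2+1}.$$
Here $S_1\wedge R\in\mathcal{D}^0_{k-p_2+1}$ is the exact current furnished by the super-potential formalism (both $S_1$ and $S_2$ have continuous super-potentials, so $S_1\wedge S_2$ is symmetric and well defined). The identity is the compatibility of these definitions: for $R$ smooth it follows, after moving $\ddc$ across as in Lemmas \ref{pb-commute} and \ref{shuffle}, from the fact that $S_1\wedge U_R$ is a potential of $S_1\wedge R$, and it extends to all $R$ by $*$-density (Remark \ref{dense}), using that $\mathcal{U}_{S_2}$ and $R\mapsto S_1\wedge R$ are continuous. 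The decisive point is that the \emph{weaker}-regularity factor, namely $S_2$, is placed on the outside.

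The heart of the matter is then to show that the linear map $\Theta\colon R\mapsto S_1\wedge R$ is H\"older continuous on $\mathcal{D}^0_{k-p_1-p_2+1}\cap\mathcal{D}^1_{k-p_1-p_2+1}$ for $\dist_2$, with exponent $\beta:=\min\{\lambda_1,1\}/2$; by linearity this amounts to
$$\|S_1\wedge R''\|_{-2}\lesssim\|R''\|_{-2}^{\beta}\qquad\text{for exact }R''\text{ with }\|R''\|_*\lesssim1.$$
This estimate uses only the $\lambda_1$-H\"older regularity of $\mathcal{U}_{S_1}$ and is exactly the content underlying \cite[Proposition 3.4.2]{DS10JAG}. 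To prove it I would test $S_1\wedge R''$ against a smooth form $\Omega$ with $\|\Omega\|_{\mathcal{C}^2}\le1$ and rewrite, first for smooth data and then by density,
$$\langle S_1\wedge R'',\Omega\rangle=\mathcal{U}_{S_1}(\ddc\Omega\wedge R''),$$
where $\ddc\Omega\wedge R''$ is exact with mass $\lesssim\|\Omega\|_{\mathcal{C}^2}\|R''\|_*\lesssim1$. The $\lambda_1$-H\"older bound for $\mathcal{U}_{S_1}$ together with Proposition \ref{interpol} controls the right-hand side by a positive power of $\dist_2(\ddc\Omega\wedge R'',0)$; balancing a regularization $\Omega_\theta$ of $\Omega$ against $\|R''\|_{-2}$ (trading the growth $\|\ddc\Omega_\theta\|_{\mathcal{C}^2}\lesssim|\theta|^{-2}$ against the approximation error, in the spirit of the proof of Lemma \ref{logq_test}) produces the exponent $1/2$, hence $\beta$ after taking the supremum over $\Omega$.

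With $\Theta$ shown to be $\beta$-H\"older, both cases follow by composing moduli of continuity. Since super-potentials are linear, normalizing the mass of $S_1\wedge R$ and applying the $w$-continuity of $\mathcal{U}_{S_2}$ gives
$$|\mathcal{U}_{S_1\wedge S_2}(R)|=|\mathcal{U}_{S_2}(S_1\wedge R)|\lesssim w_2\big(\|S_1\wedge R\|_{-2}\big)\lesssim w_2\big(C\|R\|_{-2}^{\beta}\big).$$
In the H\"older case $w_2=w^{C_2,\lambda_2}$, and this yields H\"older exponent $\beta\min\{\lambda_2,1\}=\min\{\lambda_1,1\}\min\{\lambda_2,1\}/2$ (the cap at $1$ arising through interpolation exactly as in Lemma \ref{hold_sup_hold}). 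In the $\log$ case $w_2=w_{M,q}$, and here I would use the elementary inequality
$$w_{M,q}(C\delta^{\beta})\lesssim w_{M',q}(\delta)\qquad(0<\delta<1),$$
valid because $1+\beta|\log\delta|\ge\beta(1+|\log\delta|)$, so that precomposing a $w_{M,q}$ modulus with a H\"older map merely rescales the constant $M$ while leaving the exponent $q$ unchanged. This shows $\mathcal{U}_{S_1\wedge S_2}$ is $w_{M',q}$-continuous, i.e.\ $\log^q$-continuous, as claimed.

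I expect the main obstacle to be the regularity-transfer estimate of the second paragraph: one must make sense of $S_1\wedge R''$ for non-smooth $R''$, justify the identity $\langle S_1\wedge R'',\Omega\rangle=\mathcal{U}_{S_1}(\ddc\Omega\wedge R'')$ by density, and carry out the derivative-counting and interpolation producing the exponent $1/2$ uniformly over all test forms $\Omega$ with $\|\Omega\|_{\mathcal{C}^2}\le1$. By contrast, once this H\"older continuity of $\Theta$ is available, the $\log$ case costs nothing beyond the observation that the $\log^q$ exponent is stable under precomposition with a H\"older map --- precisely the phenomenon emphasized in the introduction.
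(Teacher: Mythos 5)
Your overall skeleton coincides with the core of the paper's proof: split off the factor with the weaker regularity via $\cU_{S_1\wedge S_2}(R)=\cU_{S_2}(S_1\wedge R)$, prove that $R\mapsto S_1\wedge R$ is H\"older for $\dist_2$ with exponent $\min\{\lambda_1,1\}/2$, and conclude by composing moduli of continuity; your closing observation that precomposing $w_{M,q}$ with a H\"older map only rescales $M$ and preserves the exponent $q$ is correct and is exactly how the paper finishes. The genuine gap is at your starting point: both identities you rely on are \emph{false} for general $S_1,S_2$, because super-potentials are defined through \emph{normalized} potentials. The current $S_1\wedge U_R$ is a potential of $S_1\wedge R$, but not the normalized one; the normalized potential is $S_1\wedge U_R+\gamma'$, where $\gamma'$ is a smooth closed form chosen so that $\langle S_1\wedge U_R+\gamma',\alpha_{p_2,l}\rangle=0$ for all $l$. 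Consequently
$$\cU_{S_2}(S_1\wedge R)=\cU_{S_1\wedge S_2}(R)+\langle S_2,\gamma'\rangle,$$
and the pairing $\langle S_2,\gamma'\rangle$, which depends only on the classes $\{S_2\}$ and $\{\gamma'\}$, does not vanish in general; it vanishes when $\{S_2\}=0$, which is why the paper invokes this identity only \emph{after} reducing to that case. The same defect affects your transfer identity $\langle S_1\wedge R'',\Omega\rangle=\cU_{S_1}(\ddc\Omega\wedge R'')$, which needs $\{S_1\}=0$: in the paper's Claim 3 one writes $S_1=\ddc U$, and this is exactly where exactness of $S_1$ enters. Nor are these corrections negligible formalities: since $\langle\gamma',\alpha_{p_2,l}\rangle=-\langle S_1\wedge\alpha_{p_2,l},U_R\rangle=-\cU_{S_1\wedge\alpha_{p_2,l}}(R)$, estimating $\langle S_2,\gamma'\rangle$ is equivalent to proving that the super-potentials $\cU_{S_1\wedge\alpha_{p_2,l}}$ are $\lambda_1$-H\"older-continuous (and, symmetrically, that the $\cU_{S_2\wedge\alpha_{p_1,j}}$ are $\log^q$-continuous). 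That is precisely the content of Claims 1 and 2 in the paper's proof, a substantive part of the argument: the paper first uses them to reduce to $\{S_1\}=\{S_2\}=0$, and only then runs the argument you propose. Your proposal never mentions cohomology classes or the normalization of potentials, so this whole reduction is missing.

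A secondary, fixable issue concerns your implementation of the transfer estimate. The paper does not mollify $\Omega$: it tests $S_1\wedge R$ directly against forms with $\|\Omega\|_{\cC^4}\le1$, so that $\|\ddc\Omega\|_{\cC^2}\lesssim1$ and $|\langle S_1\wedge R,\Omega\rangle|=|\cU_{S_1}(R\wedge\ddc\Omega)|\lesssim\|R\|_{-2}^{\lambda_1}$ in one line, giving $\|S_1\wedge R\|_{-4}\lesssim\|R\|_{-2}^{\lambda_1}$, and then interpolates with Proposition \ref{interpol} to get $\|S_1\wedge R\|_{-2}\lesssim\|S_1\wedge R\|_{-4}^{1/2}$; this is where the exponent $1/2$ comes from. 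Your balancing of $\|\ddc\Omega_\theta\|_{\cC^2}\lesssim|\theta|^{-2}$ against the approximation error, if run with the first-order bound $\|\Omega-\Omega_\theta\|_{\cC^0}\lesssim|\theta|$, optimizes to the exponent $\lambda_1/(1+2\lambda_1)$, which is strictly smaller than $\lambda_1/2$ when $\lambda_1>1/2$, so it misses the exponent claimed in the statement; to recover $\lambda_1/(1+\lambda_1)\ge\lambda_1/2$ you need a symmetric mollifier and the second-order bound $\|\Omega-\Omega_\theta\|_{\cC^0}\lesssim|\theta|^2\|\Omega\|_{\cC^2}$, and you also implicitly need a uniform mass bound on $S_1\wedge R''$ to pair it with $\Omega-\Omega_\theta$ in the $\cC^0$ norm. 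For the $\log^q$ conclusion any positive H\"older exponent suffices, so this second point only affects the H\"older case of the statement.
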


\begin{proof}
    We can assume without loss of generality that $\lambda_1,\lambda_2\le1$.

    Let $\displaystyle\{S_1\}=\sum_{j=1}^{h_{p_1}} a_j\{\alpha_{p_1,j}\}$ and $\displaystyle\{S_2\}=\sum_{l=1}^{h_{p_2}} b_l\{\alpha_{p_2,l}\}$ be the cohomology decompositions of $\{S_1\}$ and $\{S_2\}$. By the linearity of intersection and super-potentials we have
    \begin{equation} \label{swedga}
        \cU_{S_1\wedge S_2}=\cU_{S_1\wedge\left(S_2-\sum_{l=1}^{h_{p_2}}b_l\alpha_{p_2,l}\right)}+\sum_{l=1}^{h_{p_2}}b_l\cdot\cU_{S_1\wedge\alpha_{p_2,l}}.
    \end{equation}

    \textbf{Claim 1.} For every $l=1,\dots,h_{p_2}$ the super-potential $\cU_{S_1\wedge\alpha_{p_2,l}}$ is $\lambda_1$-H\"older-continuous.
    
    \begin{proof}
        Fix $l$ between $1$ and $h_{p_2}$ and set $\alpha:=\alpha_{p_2,l}$. Consider a current $R$ in $\cD^0_{k-p_1-p_2+1}\cap\cD^1_{k-p_1-p_2+1}$ with normalized potential $U_R$. By Remark \ref{dense}, we only have to consider the case where $R$ is smooth. We have
        \begin{equation}\label{intersect_smooth_1}
            \cU_{S_1\wedge\alpha}(R)=\langle S_1\wedge\alpha,U_R\rangle=\langle S_1,U_R\wedge\alpha\rangle=\langle S_1-\sum_{j=1}^{h_{p_1}}a_j\alpha_{p_1,j},U_R\wedge\alpha\rangle+\sum_{j=1}^{h_{p_1}}a_j\langle \alpha_{p_1,j},U_R\wedge\alpha\rangle.
        \end{equation}

        Since $\displaystyle S_1-\sum_{j=1}^{h_{p_1}}a_j\alpha_{p_1,j}$ is exact, we get

        \begin{equation}\label{intersect_smooth_2}
            \langle S_1-\sum_{j=1}^{h_{p_1}}a_j\alpha_{p_1,j},U_R\wedge\alpha\rangle=\langle S_1-\sum_{j=1}^{h_{p_1}}a_j\alpha_{p_1,j},U_{R\wedge\alpha}\rangle=\langle S_1,U_{R\wedge\alpha}\rangle=\cU_{S_1}(R\wedge\alpha),
        \end{equation}
        where $U_{R\wedge\alpha}$ is the normalized potential of $R\wedge\alpha$. Combining \eqref{intersect_smooth_1} and \eqref{intersect_smooth_2}, we obtain

        \begin{equation}\label{intersect_smooth}
            \cU_{S_1\wedge\alpha}(R)=\cU_{S_1}(R\wedge\alpha)+\sum_{j=1}^{h_{p_1}}a_j\langle \alpha_{p_1,j},U_R\wedge\alpha\rangle.
        \end{equation}

        Since $\alpha$ is smooth, it is not difficult to see that $\|R\wedge\alpha\|_{-2}\lesssim \|R\|_{-2}$, so by the regularity of $\cU_{S_1}$ we have $|\cU_{S_1}(R\wedge\alpha)|\lesssim \|R\|_{-2}^{\lambda_1}$. To bound the sum in \eqref{intersect_smooth}, we fix $j$ between $1$ and $h_{p_1}$, set $\beta=\alpha_{p_1,j}$, and give an estimate of
        $$\langle \beta,U_R\wedge\alpha\rangle=\langle \alpha\wedge\beta,U_R\rangle=\cU_{\alpha\wedge\beta}(R).$$
        Since $\alpha\wedge\beta$ is smooth, by Lemma \ref{supofsmooth} we have $|\cU_{\alpha\wedge\beta}(R)| \lesssim \|R\|_{-2}\lesssim\|R\|_{-2}^{\lambda_1}$. This proves the assertion.
    \end{proof}
    By Claim 1, up to adding to the super-potential of $S_1\wedge S_2$ a term which is $\lambda_1$-H\"older-continuous, we can assume without loss of generality that $\{S_2\}=0$. Since to do so we only add to $S_2$ a combination of the $\alpha_{p_2,l}$'s, this will not change $\cU_{S_2}$. If $S_2$ has $\lambda_2$-H\"older continuous super-potential, by symmetry we can assume also that $\{S_1\}=0$. If instead $S_2$ has $\log^q$-continuous super-potential, we need to prove the following.

    \medskip

    \textbf{Claim 2.} If $S_2$ has $\log^q$-continuous super-potential, the super-potential $\cU_{S_2\wedge\alpha_{p_1,j}}$ is $\log^q$-continuous for every $j=1,\dots,h_{p_1}$.
    
    \begin{proof}
        We repeat the same proof of Claim 1. The only difference will come from the contribution of a term of the form $\cU_{S_2}(R\wedge \alpha)$ for some $\alpha$ smooth with bounded $\cC^2$-norm. Since $\cU_{S_2}$ is $\log^q$-continuous, this is still sufficient to prove the assertion.
    \end{proof}
    
    Exchanging $S_1$ and $S_2$ in \eqref{swedga} and using Claim 2, up to adding to the super-potential of $S_1\wedge S_2$ a term which is $\log^q$-continuous, we can assume without loss of generality that $\{S_1\}=0$ even in the case where $S_2$ has $\log^q$-continuous super-potential. Since to do so we only add to $S_1$ a combination of the $\alpha_{p_1,j}$'s, this will not change $\cU_{S_1}$.

    \medskip

    Consider now a current $R$ in $\cD^0_{k-p_1-p_2+1}\cap\cD^1_{k-p_1-p_2+1}$. By Remark \ref{dense}, we only have to consider the case where $R$ is smooth. Moreover, since $\|\cdot\|_{-2}\lesssim\|\cdot\|_*$, by linearity we need to check the statement only for currents $R$ such that $\|R\|_{-2}$ is sufficiently small. Since we are assuming that $\{S_2\}=0$, we have $\cU_{S_1\wedge S_2}(R)=\cU_{S_2}(S_1\wedge R)$.

    \medskip
    
    \textbf{Claim 3.} We have ${\|S_1\wedge R\|_{-4}\lesssim \|R\|_{-2}^{\lambda_1}}$
    
    \begin{proof}
        Take a smooth $(p_2-1,p_2-1)$-form $\Omega$ with $\|\Omega\|_{\cC^4}\le 1$, so $\|\ddc\Omega\|_{\cC^2}\lesssim 1$. We just need to bound $|\langle S_1\wedge R,\Omega\rangle|$. Since we are assuming $\{S_1\}=0$, take a potential $U$ of $S_1$. Take also a normalized potential $U_{R\wedge\ddc \Omega}$ of $R\wedge\ddc\Omega$. We have
        \begin{align*}
            |\langle S_1\wedge R,\Omega\rangle|&=|\langle (\ddc U)\wedge R,\Omega\rangle|=|\langle \ddc (U\wedge R),\Omega\rangle|=|\langle U\wedge R,\ddc\Omega\rangle|\\
            &=|\langle U,R\wedge\ddc\Omega\rangle|=|\langle U,\ddc U_{R\wedge\ddc\Omega}\rangle|=|\langle \ddc U,U_{R\wedge\ddc\Omega}\rangle|\\
            &=|\langle S_1,U_{R\wedge\ddc\Omega}\rangle|=|\cU_{S_1}(R\wedge\ddc\Omega)|\lesssim \|R\wedge\ddc\Omega\|_{-2}^{\lambda_1}\lesssim \|R\|_{-2}^{\lambda_1},
        \end{align*}
        where in the last inequality we used the fact that $\|\ddc\Omega\|_{\cC^2}\lesssim 1$. This proves the desired inequality.
    \end{proof}
    
    Using Claim 3 and Proposition \ref{interpol} we get
    $$\|S_1\wedge R\|_{-2}\lesssim\|S_1\wedge R\|_{-4}^{1/2}\lesssim \|R\|_{-2}^{\lambda_1/2}.$$

    To complete the proof, it suffices to use the regularity of $\cU_{S_2}$.
\end{proof}

\begin{prop} \label{DNV18AM-1_1}
    Let $S$ and $S'$ be positive currents in $\cD_s$ for some $1\le s\le k$. Assume that $S'\le S$. If $S$ has a $\lambda$-H\"older-continuous super-potential for some $0<\lambda\le1$ or, respectively, a $\log^q$-continuous super-potential for some $q>0$, then $S'$ has a $\big(C',\lambda/(50k)\big)$-H\"older-continuous super-potential, or, respectively, a $(C',q)$-$\log$-H\"older-continuous super-potential. The positive constant $C'$ depends on $S$, but is independent of $S'$.
\end{prop}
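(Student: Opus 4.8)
Throughout, write $w=w^{C,\lambda}$ in the H\"older case and $w=w_{M,q}$ in the $\log^q$ case, so that the hypothesis reads $|\cU_S(R)|\le w(\|R\|_{-2})$ for $R\in\cD^0_s\cap\cD^1_s$. The plan is to transfer this regularity from $S$ to $S'$ through a Skoda-type estimate on the $S$-mass of the regions where potentials are very negative, the domination $S'\le S$ entering \emph{only} at the level of masses, through the inequality $\langle S-S',\cdot\rangle\ge 0$ on positive test objects. By Remark \ref{dense} it suffices to bound $|\cU_{S'}(R)|=|\langle S',U_R\rangle|$ for \emph{smooth} $R\in\cD^0_{k-s+1}\cap\cD^1_{k-s+1}$, with $U_R$ a smooth normalized potential; set $\eta:=\|R\|_{-2}$, which by linearity we may take small. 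First I would regularize: letting $R_t$ be the convolution of $R$ at scale $t$ in charts (glued by a partition of unity), one has $\|R_t\|_*\lesssim 1$ and the standard smoothing estimate $\|R-R_t\|_{-2}\lesssim t^2$, and one may choose potentials $U_{R_t}$ so that $U_R-U_{R_t}$ is a potential of the exact current $R-R_t$. This gives the split
$$\cU_{S'}(R)=\langle S',U_{R_t}\rangle+\langle S',U_R-U_{R_t}\rangle.$$

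The difference term is the benign one. After reducing $R$ to its $(1,1)$-components, so that the relevant pieces of $U_R-U_{R_t}$ are differences of quasi-p.s.h.\ functions, I would use the monotonicity of convolution on p.s.h.\ functions: the pieces coming from mollification have a sign, so positivity of both $S'$ and $S-S'$ yields $0\le\langle S',\cdot\rangle\le\langle S,\cdot\rangle$ on them. Up to the fixed cohomological corrections this bounds the difference term by $|\cU_S(R-R_t)|$, hence by $w(\|R-R_t\|_{-2})\lesssim w(t^2)$ using the assumed regularity of $\cU_S$.

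The smooth term $\langle S',U_{R_t}\rangle$ is the crux. Since $U_{R_t}$ is smooth but its sup-norm grows like $|\log t|$ under a bounded-mass constraint, the naive bound $|\langle S',U_{R_t}\rangle|\lesssim\|U_{R_t}\|_\infty\,\|S'\|$ is useless. To control it I would invoke a Skoda-type estimate of the kind of Proposition \ref{skoda_clicks}: the regularity of $\cU_S$ is equivalent to a uniform decay of the $S$-mass carried where a normalized quasi-potential is very negative — exponential decay in the $\lambda$-H\"older case, and decay of order $\exp(-c\Lambda^{1/(q+1)})$ in the $\log^q$ case. Because $S-S'\ge 0$, this decay estimate passes \emph{verbatim} to $S'$ with the same constants, and a layer-cake integration then bounds $\langle S',U_{R_t}\rangle$ uniformly in $t$. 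Optimizing in $t$ (essentially $t\sim\eta^{1/2}$) and re-expressing the outcome as a modulus of continuity in $\eta=\|S'\text{-argument}\|_{-2}$ then yields the claim, with $C'$ depending on $S$ only through its decay constants and not on $S'$. The order $q$ survives unchanged in the $\log$ case; in the H\"older case the reduction from bidegree $(s,s)$ to the scalar statement is carried out component by component, each step losing a controlled factor through the interpolation of Proposition \ref{interpol} (as in the halving $\lambda\mapsto\lambda/2$ of Proposition \ref{DS10JAG-3_4_2}), and these cumulative dimensional losses, together with the two conversions between super-potential modulus and decay rate, produce the exponent $\lambda/(50k)$.

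The main obstacle is precisely this Skoda-type equivalence in arbitrary bidegree: passing from the functional $\cU_S$ on exact currents to a statement about masses of very-negative regions requires both a regularization at an optimal scale and a genuine reduction to the scalar (measure) case, and it is this reduction — not the elementary domination $S'\le S$ — that forces the loss $\lambda\mapsto\lambda/(50k)$ while leaving $q$ untouched.
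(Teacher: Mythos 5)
Your high-level skeleton --- regularity of $\cU_S$ implies decay of suitable masses, the domination $S'\le S$ transfers that decay to $S'$ by positivity, and the decay is converted back into regularity of $\cU_{S'}$ --- is exactly the strategy of the paper (which follows \cite{DNV18AM}). But the two steps needed to implement it are precisely the ones you leave unproved, and one of them, as formulated, fails. First, your treatment of the difference term $\langle S',U_R-U_{R_t}\rangle$ rests on a sign structure for $U_R-U_{R_t}$ coming from mollification, as for p.s.h.\ functions. This only makes sense when the potential $U_R$ is a function, i.e., when $s=k$; for general $s$ the potential $U_R$ is a $(k-s,k-s)$-form, convolution gives no pointwise order comparison, and ``reducing $R$ to its $(1,1)$-components'' is not a valid operation: a positive closed current of higher bidegree does not split into positive closed $(1,1)$-pieces. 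This is exactly the obstruction that super-potential theory is designed to bypass, and without the sign structure your bound of the difference term by $|\cU_S(R-R_t)|$ is circular (it is a quantity of the same kind as the one being estimated). Second, the ``Skoda-type equivalence'' you invoke for the main term is not Proposition \ref{skoda_clicks} (that is a uniform boundedness statement, not an equivalence between regularity of $\cU_S$ and decay of sublevel-set masses), and in arbitrary bidegree the correct substitute requires a genuine construction: the paper works on the blow-up $\widehat{X\times X}$ of $X\times X$ along the diagonal, fixes a negative quasi-p.s.h.\ kernel $\widehat{u}$ with $\ddc\widehat{u}=\widehat{\beta}-[\widehat{\Delta}]$, and controls $\theta_S(\ep)=\sup_{\tilde R}\int_{\widehat{\Delta}_\ep}-\widehat{u}\,\widehat{\Omega}^{k-1}\wedge\Pi_1^*(S)\wedge\Pi_2^*(\tilde R)$. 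Regularity of $\cU_S$ gives decay of $\theta_S$; positivity of $-\widehat{u}$, of the test forms, and of $S-S'$ gives $\theta_{S'}\le\theta_S$; and the decomposition $\cU_{S'}(R)=I_1+I_2$ of \cite[Proposition 2.8]{DNV18AM}, with $\ep_0=\|R\|_{-2}^{1/(25k)}$, converts decay of $\theta_{S'}$ back into the stated moduli of continuity --- this choice, together with the loss $\theta_S(\ep)\lesssim\ep^{\lambda/2}$, is where the exponent $\lambda/(50k)$ actually comes from, not from iterated interpolation losses. You name this reduction as ``the main obstacle'' but do not supply it, so the proof is incomplete at its central point.

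There is also a third issue your sketch skips: even once decay of $\theta_{S'}$ is known with constants independent of $S'$, the constants in the decay-to-regularity step depend a priori on the cohomology class of $S'$, through $\|\alpha\|_{\cC^0}$ and $\|S'-\alpha\|_*$, where $\alpha$ is the smooth representative of $\{S'\}$. The paper bounds these independently of $S'$ by writing $S'-\alpha$ as an explicit difference of positive closed currents dominated via $S'\le S$, and by bounding the coefficients of $\alpha$ through Poincar\'e duality, giving $|a_j|\lesssim\|S'\|\le\|S\|$. Without this verification, the claimed independence of $C'$ from $S'$ --- which is the whole point of the proposition as stated, and what makes it usable in Lemma \ref{pullback_holder} --- is not justified.
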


The case of H\"older-continuity was proved in \cite[Theorem 1.1]{DNV18AM}, up to checking that the constant $C'$ does not depend on $S'$. We will follow their strategy to prove the $\log^q$-continuous case and verify the independence of the constant from $S'$ also for the H\"older-continuous case.

\begin{rmk}
    In \cite{DNV18AM}, the authors define the value of the super-potential of a $(s,s)$-current $S$ at a $(k-s+1,k-s+1)$-current $R$ in the following way: they fix a smooth form $\alpha$ in the same cohomology class of $S$, they consider \emph{any} (not necessarily normalized) potential $U_R$ of $R$, and they define $\cU_S(R):=\langle S-\alpha, U_R\rangle$. If $\displaystyle\{S\}=\sum_{j=1}^{h_s} a_j\{\alpha_{s,j}\}$ and we choose $\displaystyle\alpha:=\sum_{j=1}^{h_s} a_j\alpha_{s,j}$, then the definition of $\cU_S$ given in \cite{DNV18AM} will coincide with our definition given using the normalized potential.
\end{rmk}

\begin{proof}[Proof of Proposition \ref{DNV18AM-1_1}.]
    Let $\Pi:\widehat{X\times X}\longrightarrow X\times X$ be the blow-up of $X\times X$ along the diagonal $\Delta$. The compact complex manifold $\widehat{X\times X}$ is a K\"ahler manifold \cite{B56ASENS}, and we consider on it the metric induced by a fixed K\"ahler form $\widehat{\Omega}$. Set $\widehat{\Delta}:=\Pi^{-1}(\Delta)$ and denote by $\widehat{\Delta}_\ep$ the $\ep$-neighbourhood of $\widehat{\Delta}$. Denote by $\pi_j$ the projections from $X\times X$ onto its factors for $j=1,2$ and deﬁne $\Pi_j:=\pi_j\circ\Pi$.

    Choose a real smooth closed $(1,1)$-form $\widehat{\beta}$ on $\widehat{X\times X}$ which is cohomologous to $[\widehat{\Delta}]$. We can write $\widehat{\beta}-[\widehat{\Delta}]=\ddc\widehat{u}$, where $\widehat{u}$ is a quasi-p.s.h.\ function on $\widehat{X\times X}$. This implies that $\widehat{u}$ is smooth outside $\widehat{\Delta}$ and $\widehat{u}-\log{\dist(\cdot,\widehat{\Delta})}$ is a bounded function near $\widehat{\Delta}$. Subtracting from $\widehat{u}$ a constant allows us to assume that it is negative.
    
    The following lemma gives an estimate of the mass of $\Pi_1^*(S)$ near $\widehat{\Delta}$ when $\cU_S$ is regular.
    
    \begin{lm}
        For every current $\tilde{S}$ in $\cD_s$, define
        \begin{equation}\label{thetaessedef}
            \theta_{\tilde{S}}(\ep):=\sup_{\tilde{R}} \int_{\widehat{\Delta}_\ep}-\widehat{u}\,\widehat{\Omega}^{k-1}\wedge\Pi_1^*(\tilde{S})\wedge\Pi_2^*(\tilde{R}),
        \end{equation}
        where the supremum is taken over all smooth positive closed $(k-s+1,k-s+1)$-forms $\tilde{R}$ on $X$ with $\|\tilde{R}\|\le1$. We have:
        \begin{equation} \label{thetaesse}
            \begin{cases}
                \theta_S(\ep)\lesssim \ep^{\lambda/2}&\text{if }\cU_S\text{ is }\lambda\text{-H\"older-continuous};\\
                \theta_S(\ep)\lesssim \frac{1}{(1+|\log{\ep}|)^q}&\text{if }\cU_S\text{ is }\log^q\text{-continuous}.
            \end{cases}
    \end{equation}
    The implicit constants may depend on $S$.
    \end{lm}
    
    \begin{proof}
        Observe that $\theta_S$ is monotone in $\ep$, so we just need to give an estimate of $\theta_S(\ep_n)$, where $\ep_n=e^{-2n-3n_0}$ and $n_0$ is a fixed positive integer that depends only on $X$. From the proofs of \cite[Proposition 2.7 and Proposition 2.8]{DNV18AM} it follows that, up to adding a term which decreases as $e^{-n}$, we just have to give an estimate of $|\tilde{\cU}(\tilde{R}_{n,m})|$. Here, the $\tilde{R}_{n,m}$'s are currents on $\widehat{X\times X}$ with $\|\tilde{R}_{n,m}\|_{-2}\lesssim e^{-n}$ and $\|\tilde{R}_{n,m}\|_*\lesssim 1$, and we have $\tilde{\cU}=\cU_S\circ(\Pi_1)_*$. From Lemma \ref{star_lip} it follows that if $\cU_S$ is $\lambda$-H\"older-continuous (respectively, $\log^q$-continuous), then $\tilde{\cU}$ is $\lambda$-H\"older-continuous (respectively, $\log^q$-continuous). Therefore, we obtain $|\tilde{\cU}(\tilde{R}_{n,m})|\lesssim e^{-\lambda n}$ (respectively, $|\tilde{\cU}(\tilde{R}_{n,m})|\lesssim \frac{1}{(1+n)^q}$). As a consequence, we have $\theta_S(\ep_n)\lesssim e^{-\lambda n}$ (respectively, $\theta_S(\ep_n)\lesssim \frac{1}{(1+n)^q}$). By the monotonicity of $\theta_S$, we get \eqref{thetaesse}.
    \end{proof}

    \emph{End of the proof of Proposition \ref{DNV18AM-1_1}.} From the fact that $\widehat{\Omega}$ is K\"ahler, $\widehat{u}$ is negative, the currents $\tilde{R}$ in \eqref{thetaessedef} are positive, and $S'\le S$, it follows that we have $\theta_{S'}\le\theta_S$. So from \eqref{thetaesse} we get
    \begin{equation} \label{thetaesseprime}
        \theta_{S'}(\ep)\lesssim \ep^{\lambda/2}\qquad\text{or}\qquad\theta_{S'}(\ep)\lesssim \frac{1}{(1+|\log{\ep}|)^q},
    \end{equation}
    depending on the regularity of $\cU_S$. Observe that the implicit constants in \eqref{thetaesseprime} do not depend on $S'$.

    Consider a smooth form $R$ in $\cD_{k-s+1}^0\cap\cD_{k-s+1}^1$ and set $\ep_0:=\|R\|_{-2}^{1/(25k)}$. As in the proof of \cite[Proposition 2.8]{DNV18AM}, we write
    \begin{equation}\label{dom_deco}
        \cU_{S'}(R)=I_1+I_2,
    \end{equation}
    where $I_1$ and $I_2$ are two quantities for which we have
    \begin{equation}\label{dommm}
        |I_1| \lesssim \theta_{S'}(4\ep_0)+\ep_0\qquad\text{and}\qquad|I_2|\lesssim \ep_0
    \end{equation}
    if $\ep_0$ is small enough (independently of $S'$). The implicit constants in \eqref{dommm} depend on $S'$. From \eqref{thetaesseprime}, \eqref{dom_deco} and \eqref{dommm} it follows that we have
    \begin{equation} \label{dom_final}
        |\cU_{S'}(R)|\lesssim \|R\|_{-2}^{\lambda/(50k)}\qquad\text{or}\qquad|\cU_{S'}(R)|\lesssim \frac{1}{(1+|\log{\|R\|_{-2}}|)^q},
    \end{equation}
    depending on the regularity of $\cU_S$. Observe that, a priori, the implicit constants in \eqref{dom_final} depend on $S'$. More precisely, if we have $\{S'\}=\displaystyle \sum_{j=1}^{h_s} a_j\{\alpha_{s,j}\}$ and we set $\displaystyle\alpha:=\sum_{j=1}^{h_s} a_j\alpha_{s,j}$, then the constants in \eqref{dommm}, and therefore also the constants in \eqref{dom_final}, depend on $\|S'-\alpha\|_*$ and $\|\alpha\|_{\cC^0}$. We need to bound these quantities independently of $S'$.
    
    We have $\alpha\le c\|\alpha\|_{\cC^0}\omega^s$, where the constant $c$ depends only on $X$. So we can write $S'-\alpha$ as the difference of two positive closed currents in the following way:
    $$S'-\alpha=(S+c\|\alpha\|_{\cC^0}\omega^s-\alpha)-(S-S'+c\|\alpha\|_{\cC^0}\omega^s).$$
    Therefore, we have
    \begin{align}\label{viet-anh_hint}
        \|S-\alpha\|_*&\le \langle S+c\|\alpha\|_{\cC^0}\omega^s-\alpha,\omega^{k-s}\rangle+\langle S-S'+c\|\alpha\|_{\cC^0}\omega^s,\omega^{k-s}\rangle\\ \nonumber
        &\le \langle 2S, \omega^{k-s}\rangle+\langle 2c\|\alpha\|_{\cC^0}\omega^s-\alpha,\omega^{k-s}\rangle,
    \end{align}
    where we used the positivity of $S'$. The term $\langle 2S, \omega^{k-s}\rangle$ does not depend on $S'$. The term $\langle 2c\|\alpha\|_{\cC^0}\omega^s-\alpha,\omega^{k-s}\rangle$ is bounded by $\|\alpha\|_{\cC^0}$, so we just have to bound the $\cC^0$-norm of $\alpha$ independently of $S'$. To do so, it is sufficient to bound each $a_j$. Fix $j_0$ between $1$ and $h_s$. By Poincaré duality, there exists a smooth closed $(k-s,k-s)$-form $\beta$, independent of $S'$, such that $\langle\alpha_j,\beta\rangle$ is equal to $1$ if $j=j_0$, and $0$ otherwise. Hence, we have $a_{j_0}=\langle S',\beta\rangle$ and
    $$|a_{j_0}|=|\langle S',\beta\rangle| \lesssim \|S'\|\le \|S\|,$$
    where we used the fact that $S'\le S$. Since the implicit constant in the chain of inequalities above depends only on $\beta$, which is independent of $S'$, the proof is complete.
\end{proof}

\subsection{The operators $\cL_\theta$}\label{linear_operators}

We now introduce a family of operators on currents that we will need to prove the main result of Section \ref{st_ests}. For $0\le s\le k$, we consider the family of linear operators $\mathcal{L}_\theta:\mathcal{D}_s \longrightarrow \mathcal{D}_s$, with $\theta$ in $\mathbb{P}^1=\mathbb{C}\cup\{\infty\}$, defined as in \cite[Section 2.4]{DS10JAG}. We will not need the precise definition, which is quite technical, but just some of their properties that we now recall, see \cite[Sections 2.3 and 2.4]{DS10JAG}.

\begin{prop}
    We have that:
    \begin{nlist}
        \item $\mathcal{L}_\theta$ depends only on $|\theta|$;
        \item $\mathcal{L}_\theta$ is continuous with respect to the $*$-topology for every $\theta\in\mathbb P^1$;
        \item $\mathcal{L}_0(S)=S$;
        \item $\mathcal{L}_\theta=\mathcal{L}_\infty$ for $\theta$ outside the unit disk;
        \item $\mathcal{L}_\theta(S)$ is in the same cohomology class of $S$ for every $\theta\in\mathbb P^1$ and $S\in\mathcal{D}_s$.
    \end{nlist}
\end{prop}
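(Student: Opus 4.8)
The plan is to unwind the construction of the operators $\mathcal{L}_\theta$ from \cite[Sections 2.3 and 2.4]{DS10JAG} and read off each property against the definition; since the statement only recalls known facts, the task is essentially bookkeeping, once the construction is set up. The cleanest realization to work with is a kernel on $X\times X$: writing $\pi_1,\pi_2$ for the two projections and $[\Delta]$ for the current of integration on the diagonal (a positive closed $(k,k)$-current on $X\times X$), one has a family of positive closed $(k,k)$-currents $\mathbb{K}_\theta$, all cohomologous to $[\Delta]$ and with $\mathbb{K}_0=[\Delta]$, such that
$$\mathcal{L}_\theta(S)=(\pi_2)_*\big(\pi_1^*(S)\wedge\mathbb{K}_\theta\big).$$
The family is arranged so that $\mathbb{K}_\theta$ depends on $\theta$ only through $|\theta|$ (it arises from a fiberwise dilation in the normal direction of $\Delta$, which carries the relevant rotational symmetry), and so that this dilation saturates on the unit circle, i.e.\ $\mathbb{K}_\theta=\mathbb{K}_\infty$ whenever $|\theta|\ge1$. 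All the dependence of $\mathcal{L}_\theta$ on $\theta$ is then channelled through $\mathbb{K}_\theta$.

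With this description the four elementary properties follow at once. Properties (i) and (iv) are inherited verbatim from the corresponding features of the family $\{\mathbb{K}_\theta\}$. For (iii) I would invoke the reproducing property of the diagonal: testing $(\pi_2)_*(\pi_1^*(S)\wedge[\Delta])$ against a form $\Phi$ on $X$ and using that $\pi_1,\pi_2$ restrict to the identity on $\Delta\cong X$ gives
$$\mathcal{L}_0(S)=(\pi_2)_*\big(\pi_1^*(S)\wedge[\Delta]\big)=S.$$
For (v), since $\{\mathbb{K}_\theta\}=\{[\Delta]\}$ for every $\theta$, and since the cohomology class of $(\pi_2)_*(\pi_1^*(S)\wedge\mathbb{K}_\theta)$ depends only on $\{S\}$ and on $\{\mathbb{K}_\theta\}$ (pull-back, cup product, and Gysin push-forward all descend to cohomology), we conclude $\{\mathcal{L}_\theta(S)\}=\{\mathcal{L}_0(S)\}=\{S\}$.

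Property (ii) is where the real content sits, and it is the step I would treat most carefully. For fixed $\theta$, the map $S\mapsto\mathcal{L}_\theta(S)$ is the composition of three operations: the pull-back $\pi_1^*$, the wedge product with $\mathbb{K}_\theta$, and the push-forward $(\pi_2)_*$. The pull-back by the submersion $\pi_1$ is continuous for the relevant distances, and the push-forward $(\pi_2)_*$ is Lipschitz for $\dist_2$ by Lemma \ref{star_lip}; moreover both preserve $\|\cdot\|_*$-boundedness because $\mathbb{K}_\theta$ is positive and closed with controlled mass. The delicate point is the middle operation: the wedge $\pi_1^*(S)\wedge\mathbb{K}_\theta$ must be well-defined and $*$-continuous in $S$ on all of $\mathcal{D}_s$, not merely for smooth $S$. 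This is exactly the situation handled by the super-potential machinery: one checks that $\mathbb{K}_\theta$ has a continuous partial super-potential, so the wedge product extends continuously from smooth forms (which are $*$-dense) to every $\|\cdot\|_*$-bounded family. Composing the three continuous operations on such bounded sets then yields (ii).

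Consequently, the main obstacle is not any individual property but the justification that the construction makes sense on singular currents — that $\pi_1^*(S)\wedge\mathbb{K}_\theta$ is defined and jointly continuous in the $*$-topology — which is precisely the technical heart of \cite[Sections 2.3 and 2.4]{DS10JAG}. Once that is taken as given, properties (i)--(v) follow from the elementary arguments above, so the proof reduces to recalling the construction and verifying these points in turn.
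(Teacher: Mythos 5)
The paper gives no proof of this proposition: the operators $\mathcal{L}_\theta$ are introduced ``as in \cite[Section 2.4]{DS10JAG}'', the text states explicitly that the precise (and ``quite technical'') definition will not be needed, and the five properties are simply recalled with a citation to \cite[Sections 2.3 and 2.4]{DS10JAG}. So there is no internal argument to compare your proposal against; the only meaningful benchmark is the cited Dinh--Sibony construction, and measured against that your sketch is structurally faithful. The regularization there is indeed of the kernel form you posit --- a family of positive closed $(k,k)$-currents $\mathbb{K}_\theta$ on $X\times X$, all cohomologous to $[\Delta]$, with $\mathbb{K}_0=[\Delta]$, depending only on $|\theta|$ and stationary for $|\theta|\ge1$ --- and, granted such a family, your derivations of (i), (iii), (iv) and (v) are correct bookkeeping, while (ii) is correctly isolated as the real content. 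Two caveats on fidelity to the source. First, the $|\theta|$-dependence and the saturation at the unit circle do not come from a ``fiberwise dilation'': in \cite{DS10JAG} they come from truncating a negative quasi-p.s.h.\ potential on the blow-up $\Pi:\widehat{X\times X}\to X\times X$ along $\Delta$, writing $[\widehat{\Delta}]-\widehat{\beta}=\ddc\widehat{u}$ with $\widehat{u}\le0$ and replacing $\widehat{u}$ by $\max(\widehat{u},\log|\theta|)$; since this equals $\widehat{u}$ at $\theta=0$ and the constant $\log|\theta|$ once $|\theta|\ge1$, it is exactly this mechanism that produces (i), (iii) and (iv), and passing from this $(1,1)$-picture on the blow-up to the $(k,k)$-kernels downstairs involves further steps you elide. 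Second, for (ii) your appeal to continuity of the wedge with $\mathbb{K}_\theta$ is the right idea: for $\theta\ne0$ the truncated potential is continuous, so $S\mapsto\pi_1^*(S)\wedge\mathbb{K}_\theta$ is continuous on $\|\cdot\|_*$-bounded sets by a Bedford--Taylor-type argument; but, as you acknowledge, this is precisely the technical heart of the cited sections. In short, your proof is a correct reconstruction in outline which, like the paper itself, ultimately defers the hard analysis to \cite{DS10JAG}; that is an acceptable resolution for a statement the paper presents purely as a recall of known results.
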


For $S\in\mathcal{D}_s$, we set $S_\theta:=\mathcal{L}_\theta(S)$. We have $\{S_\theta\}=\{S\}$. When $\{S\}=0$, we call $(S_\theta)_{\theta\in\mathbb{P}^1}$ the \textit{special structural line} associated with $S$.

\begin{lm} \label{DS10JAG-2_3_2}
    For $1\le m\le k$, put $q_m=\frac{k+1}{k-m+1}$. Take $S\in\mathcal{D}_s^0$. There exists a constant $c>0$, independent of $S$, such that:
    \begin{nlist}
        \item $\|\mathcal{L}_\infty(S)\|_{L^{q_1}}\le c\|S\|_*$;
        \item $\|\mathcal{L}_\infty(S)\|_{L^{q_{m+1}}}\le c\|S\|_{L^{q_m}}$ for every $1\le m<k$;
        \item $\|\mathcal{L}_\infty(S)\|_{L^\infty}\le c\|S\|_{L^q}$ for every $k+1\le q\le+\infty$.
    \end{nlist}
\end{lm}

\begin{lm} \label{DS10JAG-2_4_3}
    Take $S\in\mathcal{D}_s$. The current $S_\theta$ depends continuously on $(\theta,S)$ with respect to the standard topology on $\mathbb P^1$ and the $*$-topology on $\mathcal{D}_s$. In particular, we have $\|S_\theta\|_*\le c\|S\|_*$ for some constant $c>0$ independent of $S$ and $\theta$. Moreover, we have $\dist_2(S_\theta,S) \le c|\theta|\cdot\|S\|_*$ for some $c>0$ independent of $S$ and $\theta$.
\end{lm}

We also need the following results which in \cite[Section 3.2]{DS10JAG} are stated for smooth currents, and can be extended to all currents with continuous super-potential, see \cite[Proposition 2.1]{DS10CM}.

\begin{lm} \label{DS10JAG-3_2_4}
    Let $(R_\theta)_{\theta\in\mathbb{P}^1}$ be the special structural line associated to a current $R\in\mathcal{D}^0_{k-s+1}$ with continuous super-potential. Let $S$ be a current in $\mathcal{D}_s$. Then $\theta\mapsto\mathcal{U}_S(R_\theta)$ is a continuous d.s.h.\ function on $\mathbb{P}^1$ which is constant on $\{|\theta|\ge1\}$ and depends only on $|\theta|$. Moreover, we have
    $$\|\mathrm{dd^c_\theta}\,\mathcal{U}_S(R_\theta)\|_*\le c\|S\|_*\|R\|_*,$$
    where $c>0$ is a constant independent of $R$ and $S$.
\end{lm}

\begin{lm} \label{DS10JAG-3_2_5}
    Take $S\in\mathcal{D}_s^0$ and let $\mathcal{U}_{S_\theta}$ be the normalized super-potential of $S_\theta$. If ${R\in\mathcal{D}_{k-s+1}}$ has continuous super-potential, then we have $\mathcal{U}_{S_\theta}(R)=\mathcal{U}_S(R_\theta)$ for every $\theta\in\mathbb{P}^1$.
\end{lm}
\vspace*{-\baselineskip}

\subsection{Skoda-type estimates} \label{st_ests}

The goal of this section is to prove the following ``Skoda-type'' estimate, which is the analogue of \cite[Theorem 3.2.6]{DS10JAG} and \cite[Proposition 2.1]{DS10CM} for currents with $\log$-continuous super-potentials.

\begin{prop} \label{skoda_clicks}
    There is a constant $A\ge1$, depending only on $X$, such that, if $R$ is a current in $\mathcal{D}^0_{k-s+1}\cap\mathcal{D}^1_{k-s+1}$ whose super-potential $\mathcal{U}_R$ is $(M,q)$-$\log$-continuous for some $M,q>0$, then for every current $S$ in $\mathcal{D}^0_s\cap\mathcal{D}^1_s$ the super-potential $\mathcal{U}_S$ of $S$ satisfies
    $$|\mathcal{U}_S(R)| \le A\big(1+M^{\frac{1}{q+1}}\big).$$
\end{prop}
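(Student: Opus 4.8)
The statement is a "Skoda-type" estimate: it bounds the pairing $\mathcal{U}_S(R)$ uniformly over all $S$ in the $*$-unit ball, given only that $R$ has $(M,q)$-$\log$-continuous super-potential. The main obstacle is that $\mathcal{U}_S(R)$ need not be controlled by the regularity of $\mathcal{U}_R$ directly—we have a regularity bound on one factor but need a bound on the pairing. The classical Skoda estimate (Theorem 3.2.6 in DS10JAG) achieves this for currents whose super-potentials are bounded; here we must turn a modulus of continuity into a bound, and the bound $A(1+M^{1/(q+1)})$ has a specific subadditive dependence on $M$.

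Let me think about what tools are available and how they combine.

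The symmetry lemma (Lemma \ref{shuffle}) gives $\mathcal{U}_S(R) = \mathcal{U}_R(S)$ when $\mathcal{U}_R$ is continuous. So I can rewrite the target as bounding $\mathcal{U}_R(S)$. Now $\mathcal{U}_R$ is $(M,q)$-log-continuous, meaning $|\mathcal{U}_R(S')| \le M/(1+|\log\|S'\|_{-2}|)^q$ for $S'$ in the exact unit ball. But $S$ itself is a fixed current with $\|S\|_{-2}$ of order 1, so directly this only gives $|\mathcal{U}_R(S)| \le M$, which is too weak—we want $M^{1/(q+1)}$.

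So the log-continuity alone is insufficient; I need to exploit the regularity of $S$ too, via the structural-line / $\mathcal{L}_\theta$ machinery.

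**The plan.**

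The natural approach mirrors the proof of the original Skoda estimate. I would use the special structural line $(R_\theta)_{\theta \in \mathbb{P}^1}$ associated with $R$ (since $R$ is exact). By Lemma \ref{DS10JAG-3_2_5}, $\mathcal{U}_{R_\theta}(S) = \mathcal{U}_R(S_\theta)$, and by the shuffle identity this connects to $\mathcal{U}_S(R_\theta)$. The key object is the function $v(\theta) := \mathcal{U}_S(R_\theta)$, which by Lemma \ref{DS10JAG-3_2_4} is a continuous d.s.h. function on $\mathbb{P}^1$, depending only on $|\theta|$, constant for $|\theta| \ge 1$, with $\|\mathrm{dd^c_\theta}\, v\|_* \le c\|S\|_*\|R\|_* \le c$.

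Here is the heart of the argument. We want $v(0) = \mathcal{U}_S(R)$. The function $v$ has bounded $\mathrm{dd^c}$ mass (order $1$, uniformly in $S$ and in $M$), so $v$ is a d.s.h. function on $\mathbb{P}^1$ with uniformly controlled DSH-seminorm. At the same time, evaluating $v$ at points with $|\theta|$ small gives, via Lemma \ref{DS10JAG-2_4_3}, $\dist_2(R_\theta, R) \le c|\theta|$, so $\|R_\theta - R\|_{-2} \le c|\theta|$; combined with the $(M,q)$-log-continuity of $\mathcal{U}_S$... no—here is the subtlety: I do *not* know $\mathcal{U}_S$ is log-continuous; I only know it for $\mathcal{U}_R$. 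So I should instead run the structural line for $S$, not $R$, or use the log-continuity of $\mathcal{U}_R$ against $S_\theta$.

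Let me recast. Use the structural line $(S_\theta)$ of $S$. Set $v(\theta) := \mathcal{U}_{S_\theta}(R) = \mathcal{U}_S(R_\theta) = \mathcal{U}_R(S_\theta)$ (chaining Lemmas \ref{DS10JAG-3_2_5} and \ref{shuffle}). Now I have two independent pieces of information about $v$:
(a) from $v(\theta) = \mathcal{U}_R(S_\theta)$ and the $(M,q)$-log-continuity of $\mathcal{U}_R$: since $\|S_\theta\|_{-2} \le c|\theta|$ (as $S$ is exact, $S_0 = S$... careful: $\dist_2(S_\theta, S) \le c|\theta|$, and if I want $S_\theta$ *small* I need a variant—but the structural line for an exact $S$ has $\{S_\theta\}=0$ and one can estimate $\|S_\theta\|_{-2}$). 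Actually the log-continuity bound gives $|v(\theta)| = |\mathcal{U}_R(S_\theta)| \le M/(1+|\log\|S_\theta\|_{-2}|)^q$.
(b) from Lemma \ref{DS10JAG-3_2_4}: $\|\mathrm{dd^c_\theta}\, v\|_* \le c\|R\|_*\|S\|_* \le c$, a bound independent of $M$.

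**Conclusion via a one-variable d.s.h. estimate.** The function $v$ is radial d.s.h. on $\mathbb{P}^1$, constant on $|\theta|\ge 1$, with $\mathrm{dd^c}$-mass $\le c$. Writing $t = |\theta|$, $v$ is essentially a function of $\log t$ whose second derivative (in the distributional sense) has bounded total mass $c$. We want to bound $|v(0)|$. The strategy is an optimization: near $\theta$ with $|\theta| = t$ small, piece (a) gives $|v(\theta)| \lesssim M/(1+|\log t|)^q$. The d.s.h. control (b) lets me compare $v(0)$ to $v$ at radius $t$: for a radial subharmonic-type function with bounded Laplacian mass, $|v(0) - (\text{average over } |\theta|=t)| \lesssim c\,|\log t|$ (the fundamental-solution estimate on $\mathbb{C}$).

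So I get, for each small $t$,
$$|v(0)| \lesssim \frac{M}{(1+|\log t|)^q} + c\,|\log t|.$$
Now optimize over $t$. Setting $L = |\log t|$, I minimize $M/(1+L)^q + cL$. The optimal $L$ scales like $M^{1/(q+1)}$ (balancing $M/L^q \sim L$ gives $L^{q+1} \sim M$), yielding a bound of order $M^{1/(q+1)}$, plus the additive constant from the boundary/normalization giving the $+1$. This reproduces exactly $|\mathcal{U}_S(R)| = |v(0)| \le A(1+M^{1/(q+1)})$.

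**Where the real work lies.** The delicate points I expect to grind through are: (i) justifying the fundamental-solution / Harnack-type estimate $|v(0) - v_{\mathrm{avg}}(t)| \lesssim |\log t|$ rigorously for a radial d.s.h. function with bounded $\mathrm{dd^c}$-mass on $\mathbb{P}^1$, handling the two (positive and negative) parts of $\mathrm{dd^c}\, v$ separately via the $\|\cdot\|_*$-decomposition; and (ii) controlling $\|S_\theta\|_{-2}$ from below in terms of $t = |\theta|$ so that the log-continuity bound genuinely improves as $t \to 0$—this uses $\dist_2(S_\theta, S) \le c|\theta|$ together with the continuity at $\theta = \infty$ where $S_\infty = \mathcal{L}_\infty(S)$ is controlled by the Skoda/$L^p$ estimates of Lemma \ref{DS10JAG-2_3_2}. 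The optimization itself is elementary once both estimates are in hand.
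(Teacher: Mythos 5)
Your overall architecture (structural lines, a radial d.s.h.\ function $v(\theta)=\mathcal{U}_R(S_\theta)$ on $\mathbb{P}^1$ with $\|\mathrm{dd^c_\theta}\,v\|_*\lesssim 1$, and a final optimization balancing $M/(1+L)^q$ against a term linear in $L$ to produce $M^{1/(q+1)}$) points in the right direction, but the two estimates you feed into the optimization are both false, and in fact you have swapped the roles that the two hypotheses play in the paper's proof. First, your estimate (a): the $(M,q)$-$\log$-continuity of $\mathcal{U}_R$ bounds $|\mathcal{U}_R(S')|$ by $w_{M,q}(\|S'\|_{-2})$, so it only helps when applied to a current with \emph{small} $\|\cdot\|_{-2}$. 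But $\|S_\theta\|_{-2}$ is not small for small $|\theta|$: by Lemma \ref{DS10JAG-2_4_3} we have $S_\theta\to S$ in $\dist_2$ as $\theta\to 0$, so $\|S_\theta\|_{-2}\to\|S\|_{-2}$, which is of order $1$. Hence (a) only yields the useless bound $|v(\theta)|\le M$. What the $\log$-continuity actually controls is the \emph{difference}: $|v(0)-v(\theta)|=|\mathcal{U}_R(S-S_\theta)|\le w_{M,q}\big(c|\theta|\big)$, and this is how the paper uses it. Second, your ``fundamental-solution/Harnack'' estimate $|v(0)-v_{\mathrm{avg}}(t)|\lesssim c|\log t|$ is false for d.s.h.\ functions with bounded $\mathrm{dd^c}$-mass: take $\mathrm{dd^c}v=\mu^+-\mu^-$ with $\mu^+$ the uniform measure of mass $c$ on the circle $\{|\theta|=\ep\}$ and $\mu^-$ the one on $\{|\theta|=1\}$; then $v(\theta)=c\log\max(|\theta|,\ep)-c\log\max(|\theta|,1)$ is continuous, radial, vanishes outside the unit disk, has $\mathrm{dd^c}$-mass $2c$, yet $|v(0)-v(t)|=c\,|\log(\ep/t)|$ is arbitrarily large as $\ep\to0$. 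A mass bound on $\mathrm{dd^c}v$ never controls the value at the center; what it gives is exponential integrability (Lemma \ref{DS10JAG-2_2_4}), i.e.\ a pointwise bound at \emph{most} points of a disk, not at its center nor on circle averages (which here coincide with pointwise values, since $v$ is radial).

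The paper's proof uses exactly your two ingredients but with the roles exchanged: the exponential estimate of Lemma \ref{DS10JAG-2_2_4} produces \emph{some} point $\theta$ with $|\theta|\le e^{1-M^{1/(q+1)}}$ and $|v(\theta)|\le A\big(1+M^{1/(q+1)}\big)$ (the set where $|v|$ exceeds this level has area smaller than that of the disk), and then the $\log$-continuity of $\mathcal{U}_R$ bridges from $\theta$ to the center: $|v(0)-v(\theta)|\le w_{M,q}(|\theta|)\le M^{1/(q+1)}$. Moreover, your argument lacks any anchor: Lemma \ref{DS10JAG-2_2_4} requires $v$ to vanish outside the unit disk, i.e.\ you must subtract $v(\infty)=\mathcal{U}_R\big(\mathcal{L}_\infty(S)\big)$, which is not a priori bounded, since $\mathcal{L}_\infty(S)$ is only an $L^{q_1}$-form. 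This is why the paper iterates $\mathcal{L}_\infty$ a total of $k+2$ times, using Lemma \ref{DS10JAG-2_3_2} to upgrade the $L^{q_m}$-regularity at each step until $S_{k+2}\in L^\infty$ (whence $m_{k+2}=\langle S_{k+2},U_R\rangle$ is bounded by a universal constant), and then telescopes the differences $|m_i-m_{i+1}|$, applying the two-ingredient argument above at each of the $k+2$ steps. Without this telescoping, your single structural line from $S$ to $\mathcal{L}_\infty(S)$ has no point at which $v$ is known to be bounded, so even after correcting the two estimates the argument cannot close.
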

\vspace*{-\baselineskip}

In the proof of Proposition \ref{skoda_clicks} we will need the following lemma, which is a version of the classic Skoda estimate \cite{S72BSMF}, \cite[Theorem 4.4.5]{H90Book}, see for instance \cite[Lemma 2.2.4]{DS10JAG}.

\begin{lm} \label{DS10JAG-2_2_4}
    Let $u$ be a d.s.h.\ function on $\mathbb{P}^1=\mathbb{C}\cup\{\infty\}$. Assume that $u$ vanishes outside the unit disk of $\mathbb{C}$ and that $\ddc u$ is a measure with $\|\ddc u\|_*\le1$. Then there are constants $\lambda>0$ and $\beta>0$ independent of $u$ such that
    $$\|e^{\lambda|u|}\|_{L^1(\fs)} \le \beta.$$
\end{lm}
\vspace*{-\baselineskip}

\begin{proof}[Proof of Proposition \ref{skoda_clicks}.]
    Multiplying $S$ by a sufficiently small constant allows us to assume that $\|S\|_*\le c^{-k-3}$, where $c$ is as in Lemma \ref{DS10JAG-2_3_2}. We define
    $$S_0:=S\quad\text{and}\quad S_{i+1}:=\mathcal{L}_\infty(S_i)\quad\text{for}\quad0\le i\le k+1.$$
    We define also
    $$u_i(\theta):=\mathcal{U}_{\mathcal{L}_\theta(S_i)}(R)\quad\text{and}\quad{m_i:=u_i(0)=u_{i-1}(\infty)}.$$
    Lemmas \ref{DS10JAG-2_3_2}, \ref{DS10JAG-2_4_3}, \ref{DS10JAG-3_2_4}, and \ref{DS10JAG-3_2_5} give us
    \begin{equation} \label{justthemiddle}
        \|S_i\|_*\le1/c,\quad\|\ddc u_i\|_*\le1\quad\text{for every }0 \le i\le k+2,\quad\text{and}\quad\|S_{k+2}\|_{L^\infty} \le 1.
    \end{equation}
    The last inequality implies that $|m_{k+2}|$ is bounded by a constant independent of $S$ and $R$. Indeed, $R$ always admits a potential $U_R$ of bounded $L^1$-norm, and we have $\{S_{k+2}\}=0$, so we get $m_{k+2}=\langle S_{k+2}, U_R\rangle$.

    We need to show that $|m_0| \le A\big(1+M^{\frac{1}{q+1}}\big)$ for some constant $A\ge1$. We can assume, without loss of generality, that we have $M\ge1$. It is enough to check that we have
    $$|m_i-m_{i+1}| \le A\big(1+M^{\frac{1}{q+1}}\big)$$
    for some constant $A\ge1$ depending only on $X$. We have $m_i-m_{i+1}=v_i(0)$ where ${v_i:=u_i-m_{i+1}}$. Lemmas \ref{DS10JAG-3_2_4} and \ref{DS10JAG-3_2_5} and the second inequality in \eqref{justthemiddle} imply that the $v_i$'s are continuous, vanish outside the unit disc and satisfy $\|\ddc v_i\|_*\le1$. By Lemma \ref{DS10JAG-2_2_4} we have $\|e^{\lambda|v_i|}\|_{L^1(\fs)}\le \beta$ for some universal constants $\beta>0$ and $\lambda>0$. Arguing by contradiction, we deduce that there is a $\theta$ satisfying
    $$|\theta|\le e^{1-M^{\frac{1}{q+1}}}\quad\text{and}\quad|v_i(\theta)|\le A\big(1+M^{\frac{1}{q+1}}\big)$$
    for a constant $A\ge1$ sufficiently large (it is enough to choose $A\ge\max\{2,C,\lambda\}/\lambda$, with $C$ such that ${\pi\cdot e^{C+2}>10\beta}$). Finally, using Lemma \ref{shuffle} and the $\log$-continuity of $\mathcal{U}_R$, we get
    \begin{align*}
        |v_i(0)-v_i(\theta)|&=|\mathcal{U}_{S_i}(R)-\mathcal{U}_{\mathcal{L}_\theta(S_i)}(R)|=\left|\mathcal{U}_R(S_i)-\mathcal{U}_R\big(\mathcal{L}_\theta(S_i)\big)\right|\\
        &\le w_{M,q}\Big(\dist_2\big(S_i,\mathcal{L}_\theta(S_i)\big)\Big) \le w_{M,q}(|\theta|) \le M^{\frac{1}{q+1}}.
    \end{align*}

    Therefore, we obtain $|v_i(0)|\le 2A\big(1+M^{\frac{1}{q+1}}\big)$. Up to replacing $A$ with $(2k+4)A+|m_{k+2}|$, this completes the proof.
\end{proof}

\section{Automorphisms of compact K\"ahler manifolds}\label{kahler}

\subsection{Equidistribution results} \label{kahler_sub1}
The goal of this section is to prove Theorem \ref{goal_kahler}.

\smallskip

By Poincaré duality, the dynamical degree $d_s$ of $f$ is equal to the dynamical degree $d_{k-s}(f^{-1})$ of $f^{-1}$. Fix $\ep>0$. Set $\delta_s:=d_s+\ep$ for every $0\le s\le k$ with $s\not=p$, and $\delta_p=d_p$. Since the mass of a positive closed current can be computed cohomologically, for every $0\le s\le k$ and for every $S\in\mathcal{D}_s$ we have
\begin{equation}\label{dynamic-of-f}
    \|(f^n)^*(S)\|_* \le c\delta_s^n \|S\|_*
\end{equation}
for some constant $c>0$ independent of $S$ and $n$. We can assume $c\ge1$ without loss of generality.

\medskip

We will need the following lemma, see for instance \cite[Lemma 2.8 (i), Lemma 3.1, and Corollary 3.3]{BD26AM}.

\begin{lm} \label{BD23-3_3}
    Let $f,d_p,\delta,T_+$ be as in the introduction, and let $S$ be a current in $\mathcal{D}_p$.
    \begin{nlist}
        \item $T_+$ is the unique positive closed current in $\{T_+\}$, and it has a H\"older-continuous normalized super-potential.
        \item We have $d_p^{-n}(f^n)^*S\rightharpoonup rT_+$. Here, the constant $r=r(S)$ depends linearly on $\{S\}$. More precisely, $r$ is the constant such that $d_p^{-n}(f^n)^*\{S\}\rightarrow r\{T_+\}$.
    \end{nlist}
    Let now $S$ be such that $\|S\|_*\le1$ and $r(S)=0$. Let $\xi$ be a smooth $(k-p,k-p)$-form. Then
    $$|\langle d_p^{-n}(f^n)^*(S),\xi\rangle| \le c\|\xi\|_{\mathcal{C}^2}(\delta/d_p)^n,$$
    where $c>0$ is a constant independent of $S,\xi$, and $n$.
\end{lm}

\begin{prop} \label{pushforward_kahler}
    Let $R$ be a current in $\mathcal{D}^0_{k-s+1}$, $0\le s\le k$, whose super-potential $\mathcal{U}_R$ is $(M,q)$-$\log$-continuous for some $M,q>0$. Then there exists a constant $C_q\ge1$ such that the super-potential $\mathcal{U}_{f_*^n(R)}$ of $f_*^n(R)$ is $(C_q n^q \delta_s^n M,q)$-$\log$-continuous for every $n\ge1$.
\end{prop}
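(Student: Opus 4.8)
The goal is to control how the log-Hölder modulus of the super-potential $\mathcal{U}_R$ degrades under iterated push-forward by $f$. The plan is to use the commutation identity from Lemma \ref{pb-commute}, namely $\mathcal{U}_{f_*^n(R)}(S)=\mathcal{U}_R\big((f^n)^*(S)\big)$, valid for smooth exact $S$ and extending to all of $\mathcal{D}^0_s\cap\mathcal{D}^1_s$ by continuity (Remark \ref{dense}). So for a current $S\in\mathcal{D}^0_s\cap\mathcal{D}^1_s$, estimating $|\mathcal{U}_{f_*^n(R)}(S)|$ reduces to estimating $|\mathcal{U}_R\big((f^n)^*(S)\big)|$, for which I can feed the current $(f^n)^*(S)$ into the hypothesis on $\mathcal{U}_R$.

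The subtlety is that the hypothesis $|\mathcal{U}_R(S')|\le w_{M,q}(\|S'\|_{-2})$ requires $S'$ to lie in $\mathcal{D}^1_{k-s+1}$, i.e.\ to have $\|\cdot\|_*$-mass at most $1$, whereas $(f^n)^*(S)$ has mass growing like $\delta_s^n$ by \eqref{dynamic-of-f}. First I would therefore set $S':=c^{-1}\delta_s^{-n}(f^n)^*(S)$, which by \eqref{dynamic-of-f} satisfies $\|S'\|_*\le1$ and is still exact (pull-back preserves exactness). By linearity of the super-potential, $\mathcal{U}_R\big((f^n)^*(S)\big)=c\,\delta_s^n\,\mathcal{U}_R(S')$, so the mass renormalization costs a factor $c\,\delta_s^n$ out front; since the modulus $w_{M,q}$ is not homogeneous this rescaling is exactly where the $\delta_s^n$ appears.

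Next I would track the argument of the modulus. We have $\|S'\|_{-2}=c^{-1}\delta_s^{-n}\|(f^n)^*(S)\|_{-2}$, and by Lemma \ref{star_lip} (applied to the holomorphic map $f^n$, whose pull-back is Lipschitz for $\dist_2$, with a constant that may depend on $n$) together with the boundedness of $\|S\|_{-2}$ on $\mathcal{D}^1_s$, the quantity $\|S'\|_{-2}$ is comparable to $\|S\|_{-2}$ up to an $n$-dependent constant. Plugging into $w_{M,q}$ and using $\log\|S'\|_{-2}=\log\|S\|_{-2}+O(n)$ (the dependence on $n$ enters additively in the logarithm, coming from the factors $\delta_s^{-n}$ and the $n$-dependent Lipschitz constant of $(f^n)^*$), I get
$$
|\mathcal{U}_{f_*^n(R)}(S)|\le c\,\delta_s^n\cdot\frac{M}{\big(1+|\log\|S'\|_{-2}|\big)^q}.
$$
The key elementary estimate is then that $\dfrac{1}{\big(1+|\log\|S\|_{-2}|+an\big)^q}\le \dfrac{(1+an)^q}{\big(1+|\log\|S\|_{-2}|\big)^q}$ for a constant $a$ independent of $S$, which for $n\ge1$ is bounded by $C'n^q$ times the original modulus evaluated at $\|S\|_{-2}$. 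Combining this with the prefactor $c\,\delta_s^n$ yields a modulus of the form $(C_q\,n^q\,\delta_s^n\,M,q)$ as claimed.

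The main obstacle I anticipate is the bookkeeping in the second paragraph: making precise that the $n$-dependence of both the mass growth constant in \eqref{dynamic-of-f} and the $\dist_2$-Lipschitz constant of $(f^n)^*$ enters only \emph{additively} inside the logarithm (hence polynomially, as $n^q$, after the elementary inequality) rather than multiplicatively outside. One must be careful that the Lipschitz constant of $(f^n)^*$ from Lemma \ref{star_lip} could a priori blow up faster than exponentially in $n$; but since it affects $\|S'\|_{-2}$ only through $|\log\|S'\|_{-2}|$, even a crude bound of the form $\log(\text{Lip}((f^n)^*))\lesssim n$ suffices, and such a bound follows from the exponential mass control in \eqref{dynamic-of-f} combined with the interpolation inequality of Proposition \ref{interpol}. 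Once this additive-in-$\log$ structure is established, the rest is the routine elementary inequality for $(1+x+an)^{-q}$ and collecting the constants into $C_q$.
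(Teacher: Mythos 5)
Your proposal follows essentially the same route as the paper: commute via Lemma \ref{pb-commute}, renormalize by $c\delta_s^n$ using \eqref{dynamic-of-f} (this is where the factor $\delta_s^n$ comes from), control $\|S'\|_{-2}$ against $\|S\|_{-2}$ with an error that is additive inside the logarithm, and convert that additive shift into the polynomial factor $n^q$ by an elementary inequality. The paper organizes the last step as a case analysis (either $1+|\log\|S\|_{-2}|\le\tilde Cn$, which is trivial, or $\|S\|_{-2}$ is exponentially small in $n$, in which case the Lipschitz bound forces $|\log\|S'\|_{-2}|\ge\frac{1}{2}|\log\|S\|_{-2}|$), but this is the same idea as your additive-in-log bookkeeping.

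Two points need repair, one of which sits exactly at the step you yourself flagged as the main obstacle. First, the minor one: your ``key elementary estimate'' is written with $+an$ inside the left-hand denominator, where it should be $-an$ (capped below at $0$), since the dangerous case is $|\log\|S'\|_{-2}|$ being \emph{smaller} than $|\log\|S\|_{-2}|$; the corrected inequality $\bigl(1+\max\{x-an,0\}\bigr)^{-q}\le(1+an)^q(1+x)^{-q}$ is equally elementary and is what you actually use. Second, and more seriously: your proposed justification that $\log\bigl(\mathrm{Lip}((f^n)^*)\bigr)\lesssim n$ ``follows from the exponential mass control in \eqref{dynamic-of-f} combined with the interpolation inequality of Proposition \ref{interpol}'' does not work. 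Mass control bounds $\|(f^n)^*S\|_*$ by $\|S\|_*$, not $\|(f^n)^*S\|_{-2}$ by $\|S\|_{-2}$; for exact currents with mass of order $1$ but tiny $\|\cdot\|_{-2}$-norm this gives nothing, and interpolation only compares different $\dist_l$'s, so it cannot convert a mass bound into a $\dist_2$-Lipschitz bound. The correct argument, which is what the paper does in \eqref{liporhol}, is to apply Lemma \ref{star_lip} once to $\tau=f^{-1}$ (so that $f^*=(f^{-1})_*$ is $\dist_2$-Lipschitz with some constant $C$) and then iterate $(f^n)^*=(f^*)^n$ to get the constant $C^n$, hence an additive $O(n)$ inside the log. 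Note that this is precisely where invertibility of $f$ enters, and it is why the analogous statement for endomorphisms of $\mathbb{P}^k$ (Lemma \ref{pullback_holder}) requires a much more delicate argument yielding only H\"older, not Lipschitz, regularity. Also note that with this one-sided bound $\|S'\|_{-2}\le C^n\|S\|_{-2}$ plus the boundedness $\|S\|_{-2}\le\tilde c$, your claimed two-sided comparability is not needed: only the inequality $|\log\|S\|_{-2}|\le|\log\|S'\|_{-2}|+an+\log\tilde c$ is used, and it follows from the one-sided bound alone.
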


\begin{proof}
    Assume without loss of generality that $M=1$. Take $S\in\cD^0_s\cap\cD^1_s$. From Lemma \ref{pb-commute} we have
    $$\mathcal{U}_{f^n_*(R)}(S)=\mathcal{U}_R\big((f^n)^*(S)\big)=c\delta_s^n\cdot\mathcal{U}_R\left(\frac{(f^n)^*}{c\delta_s^n}(S)\right).$$
    From \eqref{dynamic-of-f} we have that $\displaystyle \left\|\frac{(f^n)^*}{c\delta_s^n}(S)\right\|_*\le1$ for some $c\ge1$, so we can use the $\log^q$-continuity of $\cU_R$ to obtain
    \begin{equation}\label{beforeclaim}
        |\mathcal{U}_{f^n_*(R)}(S)|=c\delta_s^n\left|\mathcal{U}_R\left(\frac{(f^n)^*}{c\delta_s^n}(S)\right)\right| \le \frac{c\delta_s^n}{\left(1+\left|\log\left\|\frac{(f^n)^*}{c\delta_s^n}(S)\right\|_{-2}\right|\right)^q}.
    \end{equation}

    The desired result is then a consequence of the following inequality:

    \begin{equation} \label{ennequ}
        1+|\log\|S\|_{-2}| \lesssim n\left(1+\left|\log\left\|\frac{(f^n)^*}{c\delta_s^n}(S)\right\|_{-2}\right|\right).
    \end{equation}
    The rest of the proof is devoted to proving \eqref{ennequ}.

    Applying Lemma \ref{star_lip} to $\tau=f^{-1}$, we have that the operator $f^*/\delta_s$ is Lipschitz, i.e., $(C,1)$-H\"older-continuous with respect to $\|\cdot\|_{-2}$ in $\cD_s$ for some constant $C>0$, and without loss of generality we can assume $C\ge1$. By induction, we get
    \begin{equation} \label{liporhol}
        \left\|\frac{(f^n)^*}{c\delta_s^n}(S)\right\|_{-2}\le \frac{C^n}{c}\|S\|_{-2}\le C^n\|S\|_{-2}.
    \end{equation}
    We also have $\|\cdot\|_{-2}\le \tilde{c}\|\cdot\|_*$, and we can assume without loss of generality that $\tilde{c}\ge1$.
    
    Fix a constant $\tilde{C}>1+\max\{\log{\tilde{c}},2\log{C}\}$. If we have $1+|\log\|S\|_{-2}|\le\tilde{C}n$, then \eqref{ennequ} trivially holds. Suppose that we have $1+|\log\|S\|_{-2}|>\tilde{C}n$. It follows that we have
    $$\log{\|S\|_{-2}}<1-\tilde{C}n\qquad\text{or}\qquad\log{\|S\|_{-2}}>\tilde{C}n-1>\log{\tilde{c}}.$$
    The second of the two inequalities above cannot hold since $\|S\|_{-2}\le \tilde{c}\|S\|_*\le \tilde{c}$. We deduce that we have $\log{\|S\|_{-2}}<1-\tilde{C}n<-2n\log{C}$, from which we get $\|S\|_{-2}\le C^{-2n}\le C^{-n}$. Therefore, from \eqref{liporhol} we obtain $\displaystyle \left\|\frac{(f^n)^*}{c\delta_s^n}(S)\right\|_{-2}\le1$. Then \eqref{ennequ} is equivalent to
    \begin{equation} \label{ennequprimo}
        1-\log\|S\|_{-2} \lesssim n\left(1-\log\left\|\frac{(f^n)^*}{c\delta_s^n}(S)\right\|_{-2}\right).
    \end{equation}
    To prove \eqref{ennequprimo}, we observe that from \eqref{liporhol} we get
    \begin{equation} \label{ennequsecondo}
        \log{\left\|\frac{(f^n)^*}{c\delta_s^n}(S)\right\|_{-2}} \le n\log{C}+\log{\|S\|_{-2}}\le\frac{1}{2}\log{\|S\|_{-2}},
    \end{equation}
    where in the second inequality we used the fact that we have $\log{\|S\|_{-2}}<-2n\log{C}$. Since \eqref{ennequsecondo} implies \eqref{ennequprimo}, the proof is complete.
\end{proof}

As a consequence of Proposition \ref{pushforward_kahler}, we have the following result of convergence for currents with $\log$-continuous super-potentials. The case of H\"older-continuous super-potentials is contained in the proof of \cite[Proposition 3.1]{DS10CM}.

\begin{thm} \label{ennequsuqupiuuno}
    For every $M,q>0$, for every $R\in\mathcal{D}_{k-p+1}^0\cap\mathcal{D}_{k-p+1}^1$ with $(M,q)$-$\log$-continuous super-potential $\mathcal{U}_R$, and for every $S\in\mathcal{D}_p^0\cap\mathcal{D}_p^1$, we have
    $$\left|\mathcal{U}_R\left(\frac{(f^n)^*}{d_p^n}(S)\right)\right| \lesssim \left(1+M^{\frac{1}{q+1}}\right)\left(\dfrac{\delta}{d_p}\right)^{\frac{nq}{q+1}},$$
    where the implicit constant depends only on $f$ and $q$.
\end{thm}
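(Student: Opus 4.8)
The plan is to transfer the estimate, by duality, from the pull-back dynamics acting on $S$ to the push-forward dynamics acting on $R$, and then to feed the result into the Skoda-type bound of Proposition \ref{skoda_clicks}. First I would apply Lemma \ref{pb-commute} with $f^n$ in place of $f$, together with the homogeneity of the super-potential, to write
\[
\mathcal{U}_R\!\left(\tfrac{(f^n)^*}{d_p^n}(S)\right)=d_p^{-n}\,\mathcal{U}_R\big((f^n)^*(S)\big)=d_p^{-n}\,\mathcal{U}_{f_*^n(R)}(S),
\]
and then Lemma \ref{shuffle} (applicable because $\mathcal{U}_{f_*^n(R)}$ will be continuous) to exchange the two slots and obtain $d_p^{-n}\,\mathcal{U}_S\big(f_*^n(R)\big)$. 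This reduces the problem to estimating the super-potential of $S$ against the pushed-forward current $f_*^n(R)$, which again lies in $\mathcal{D}^0_{k-p+1}$ since $f_*$ preserves exactness.

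Next I would collect two quantitative facts about $f_*^n(R)$. On the one hand, Proposition \ref{pushforward_kahler} (with $s=p$, so $\delta_s=\delta_p=d_p$) gives that $\mathcal{U}_{f_*^n(R)}$ is $(C_q n^q d_p^n M,\,q)$-$\log$-continuous. On the other hand, I would bound its mass: writing $f_*=(f^{-1})^*$, and noting that $k-p+1$ is \emph{not} the special index of $f^{-1}$ (which is $k-p$), the mass is governed by $d_{k-p+1}(f^{-1})=d_{p-1}$; since $d_{p-1}<\delta'$, the cohomological bound of the type \eqref{dynamic-of-f} yields $\|f_*^n(R)\|_*\le c(\delta')^n$ with $c$ depending only on $f$. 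Setting $a:=c(\delta')^n$, the current $\tilde R:=a^{-1}f_*^n(R)$ then has mass at most $1$ and, by linearity of super-potentials, a $(M'/a,\,q)$-$\log$-continuous super-potential with $M'=C_q n^q d_p^n M$. Applying Proposition \ref{skoda_clicks} to $\tilde R$ and $S$ (both exact and of mass at most $1$) gives $|\mathcal{U}_S(\tilde R)|\le A\big(1+(M'/a)^{1/(q+1)}\big)$, hence
\[
\Big|\mathcal{U}_R\!\left(\tfrac{(f^n)^*}{d_p^n}(S)\right)\Big|=d_p^{-n}\,a\,|\mathcal{U}_S(\tilde R)|\le A\,c\,(\delta'/d_p)^n\Big(1+(M'/a)^{1/(q+1)}\Big).
\]

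The decisive step is then the simplification of the exponents. Using $M'/a\lesssim n^q (d_p/\delta')^n M$ and the identity $(\delta'/d_p)^n (d_p/\delta')^{n/(q+1)}=(\delta'/d_p)^{nq/(q+1)}$, the main term becomes a constant times $n^{q/(q+1)}M^{1/(q+1)}(\delta'/d_p)^{nq/(q+1)}$, while the contribution of the $1$ is $\lesssim(\delta'/d_p)^n\le(\delta/d_p)^{nq/(q+1)}$ (since $|\log(\delta'/d_p)|>\tfrac{q}{q+1}|\log(\delta/d_p)|$).

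I expect the main obstacle to be precisely this last bookkeeping, and in particular the stray polynomial factor $n^{q/(q+1)}$, which cannot be absorbed while keeping the exponent $nq/(q+1)$ with base $\delta'$. The key point — and the reason the auxiliary constant $\delta'<\delta$ is available — is that one may trade $\delta'$ for $\delta$: since $\delta'/\delta<1$, the quantity $[\,n(\delta'/\delta)^n\,]^{q/(q+1)}$ is bounded, so
\[
n^{q/(q+1)}(\delta'/d_p)^{nq/(q+1)}\lesssim(\delta/d_p)^{nq/(q+1)},
\]
with an implicit constant depending only on $q,\delta,\delta'$. This kills the polynomial growth and yields the stated bound $\lesssim(1+M^{1/(q+1)})(\delta/d_p)^{nq/(q+1)}$. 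Conceptually, the reason the exponents close up is that the two competing rates are genuinely different: the $\log$-continuity constant of $f_*^n(R)$ inflates at the pull-back rate $d_p$ on $\mathcal{D}_p$ (through the renormalization inside Proposition \ref{pushforward_kahler}), whereas its mass inflates only at the slower push-forward rate $d_{p-1}$ on $\mathcal{D}_{k-p+1}$; it is this gap, processed through the exponent $1/(q+1)$ produced by the Skoda estimate, that generates the exponential decay.
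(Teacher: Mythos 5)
Your proposal is correct and follows essentially the same route as the paper's proof: duality via Lemmas \ref{pb-commute} and \ref{shuffle}, renormalization of $f_*^n(R)$ by $c(\delta')^n$ so that Proposition \ref{skoda_clicks} applies with the $\log$-continuity constant supplied by Proposition \ref{pushforward_kahler}, and the final trade of $\delta'$ for $\delta$ (via $n\lesssim(\delta/\delta')^n$) to absorb the polynomial factor $n^{q/(q+1)}$. Your explicit justification of the mass bound $\|f_*^n(R)\|_*\le c(\delta')^n$ through $d_{k-p+1}(f^{-1})=d_{p-1}<\delta'$ is a detail the paper leaves implicit, and it is exactly the right one.
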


\begin{proof}
    From Lemmas \ref{pb-commute} and \ref{shuffle} we have
    $$\mathcal{U}_R\left(\frac{(f^n)^*(S)}{d_p^n}\right)=\frac{1}{d_p^n}\mathcal{U}_{f^n_*(R)}(S)=\mathcal{U}_S\left(\frac{f^n_*(R)}{d_p^n}\right).$$

    By linearity, we have that $\mathcal{U}_S\left(\frac{f^n_*}{d_p^n}(R)\right)=c\left(\frac{\delta'}{d_p}\right)^n\mathcal{U}_S\left(\frac{f^n_*}{c(\delta')^n}(R)\right)$ (recall that $\delta'<\delta<d_p$). We also have, again by linearity, that $\mathcal{U}_{\frac{f^n_*}{c(\delta')^n}(R)}=\frac{1}{c(\delta')^n}\cdot\mathcal{U}_{f^n_*(R)}$. Proposition \ref{pushforward_kahler} then implies that $\mathcal{U}_{\frac{f^n_*}{c(\delta')^n}(R)}$ is $\left(C_q n^q\left(\frac{d_p}{\delta'}\right)^n M,q\right)$-$\log$-continuous. Thus, we can apply Proposition \ref{skoda_clicks} to the current $\dfrac{f^n_*}{c(\delta')^n}(R)$, obtaining
    $$\left|\mathcal{U}_S\left(\frac{f^n_*}{d_p^n}(R)\right)\right|=\left|c\left(\frac{\delta'}{d_p}\right)^n\mathcal{U}_S\left(\frac{f^n_*}{c(\delta')^n}(R)\right)\right| \le c\left(\frac{\delta'}{d_p}\right)^nA+cA(C_q M)^{\frac{1}{q+1}} n^{\frac{q}{q+1}}\left(\frac{\delta'}{d_p}\right)^{\frac{nq}{q+1}},$$
    where $A$ is given by Proposition \ref{skoda_clicks}. From $\delta'<\delta$ we have $n\lesssim (\delta/\delta')^n$. This concludes the proof.
\end{proof}

We will also need the following result, which is a consequence of the proof of \cite[Proposition 4.2.2]{DS10JAG}. Observe that in the statement the pull-back is acting on the current with regular super-potential. So, Proposition \ref{DS10JAG_4-2-2} alone would not be sufficient to prove Theorem \ref{goal_kahler}, where the regularity is on the test form, and Theorem \ref{ennequsuqupiuuno} is also required.

\begin{prop} \label{DS10JAG_4-2-2}
    Let $S$ be a current in $\mathcal{D}_p$ with continuous super-potential, and let $r$ be the constant such that $d_p^{-n}(f^n)^*(S)$ converge to $rT_+$. Let $R$ be a current in $\mathcal{D}^0_{k-p+1}\cap\mathcal{D}^1_{k-p+1}$. Let $\mathcal{U}_+,\mathcal{U}_n$ be the super-potentials of $T_+$ and $d_p^{-n}(f^n)^*(S)$, respectively. Then
    $$|\mathcal{U}_n(R)-\mathcal{U}_+(R)|\lesssim (\delta/d_p)^n,$$
    where the implicit constant depends only on $f$ and the cohomology class of $S$.
\end{prop}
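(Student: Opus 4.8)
The plan is to reduce the statement to a situation where Lemma \ref{BD23-3_3}(iii) can be applied, exploiting the fact that $T_+$ has a H\"older-continuous super-potential by Lemma \ref{BD23-3_3}(i). First I would observe that the quantity to estimate is $\cU_n(R) - \cU_+(R)$, where both super-potentials are evaluated at the \emph{same} current $R \in \cD^0_{k-p+1} \cap \cD^1_{k-p+1}$. Since $R$ is exact, and since $d_p^{-n}(f^n)^*(S)$ converges to $rT_+$, the natural object to control is the super-potential of the difference current
\begin{equation*}
    S_n := \frac{(f^n)^*}{d_p^n}(S) - rT_+.
\end{equation*}
This current lies in $\cD_p$, and its cohomology class $\{S_n\}$ tends to $0$ by the definition of $r$ in Lemma \ref{BD23-3_3}(ii); in fact, by the simple action on cohomology and the gap between $\delta$ and $d_p$, one has $\|\{S_n\}\| \lesssim (\delta/d_p)^n$. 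The goal becomes showing $|\cU_{S_n}(R)| \lesssim (\delta/d_p)^n$, after using Lemma \ref{shuffle} or \ref{pb-commute} to move the evaluation onto $S_n$ and exploit that $R$ has continuous (indeed bounded) super-potential.

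The key step is to split $S_n$ into a cohomological part and an exact part. Writing $\{S_n\} = \sum_j c_j^{(n)}\{\alpha_{p,j}\}$ with $|c_j^{(n)}| \lesssim (\delta/d_p)^n$, I would set $\alpha_n := \sum_j c_j^{(n)}\alpha_{p,j}$, so that $S_n - \alpha_n$ is exact. The contribution of $\alpha_n$ is handled by Lemma \ref{supofsmooth}: its super-potential is $(C,1)$-H\"older with $C \lesssim \|\alpha_n\|_{\cC^2} \lesssim (\delta/d_p)^n$, giving a bound of the right order when evaluated at $R \in \cD^1_{k-p+1}$. For the exact part $S_n - \alpha_n \in \cD_p^0$, I would invoke the argument in \cite[Proposition 4.2.2]{DS10JAG}: one uses that $T_+$ has H\"older-continuous super-potential to compare $\cU_{(f^n)^* S / d_p^n}$ with $\cU_+$ via the equivariance $\cU_{(f^n)^* S}(R) = \cU_S((f^n)_* R)$ and the contraction of the $f^*$-action on the complement of the leading eigendirection. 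Concretely, the H\"older regularity of $\cU_+$ controls $\cU_+$ on currents whose $\dist_2$-distance decays, while Lemma \ref{BD23-3_3}(iii) gives the decay $|\langle d_p^{-n}(f^n)^*(S - \text{leading part}), \xi\rangle| \lesssim (\delta/d_p)^n$ on smooth test forms $\xi$.

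The main obstacle I anticipate is that Lemma \ref{BD23-3_3}(iii) is stated for a \emph{smooth} test form $\xi$, whereas here the pairing is against the potential $U_R$ of a current $R$ that is only assumed to have a continuous super-potential, not to be smooth. To bridge this gap I would either regularize $R$ using the operators $\cL_\theta$ of Section \ref{linear_operators} — writing $R_\theta = \cL_\theta(R)$, estimating $\cU_{S_n}(R) - \cU_{S_n}(R_\theta)$ via Lemma \ref{DS10JAG-3_2_4} and the mass bound $\|\{S_n\}\| \lesssim (\delta/d_p)^n$, and applying the smooth estimate to $R_\theta$ — or appeal directly to the density of smooth forms (Remark \ref{dense}) together with the continuity of $\cU_+$ and the uniform $(\delta/d_p)^n$ control. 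In either case the H\"older-continuity of $\cU_+$ from Lemma \ref{BD23-3_3}(i) is what prevents a loss of the exponential rate when passing from smooth forms to the general current $R$; balancing the regularization parameter $\theta$ against the decay rate $(\delta/d_p)^n$ is the delicate point, but since the target rate $(\delta/d_p)^n$ already incorporates the spectral gap, a single well-chosen $\theta$ (as in the proof of Proposition \ref{skoda_clicks}) should suffice.
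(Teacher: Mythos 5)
Your overall skeleton is sound, and in fact the paper itself gives no proof of this proposition: it simply invokes the proof of \cite[Proposition 4.2.2]{DS10JAG}, so your reduction to $S_n:=d_p^{-n}(f^n)^*(S)-rT_+$ (which also silently corrects the missing factor $r$ in the stated inequality), the splitting off of the cohomological part $\alpha_n$, and its treatment via Lemma \ref{supofsmooth} are all consistent with what the citation hides. The genuine gap is in the quantitative scheme you build around that citation, namely the claim that Lemma \ref{BD23-3_3}(iii) plus a regularization or density argument handles the exact part. Lemma \ref{BD23-3_3}(iii) only bounds $\langle S_n,\xi\rangle$ for \emph{smooth} $\xi$, i.e.\ it says $\|S_n\|_{-2}\lesssim(\delta/d_p)^n$; upgrading this to a bound on $\cU_{S_n}(R)$ uniform over $R\in\cD^0_{k-p+1}\cap\cD^1_{k-p+1}$ is precisely a quantitative equicontinuity statement, which would require a modulus of continuity either for $\cU_R$ (none is assumed: $R$ is an arbitrary current) or for $\cU_{S_n}$ uniformly in $n$ (which is what is being proved). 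Both of your bridges fail concretely: (a) $\cL_\theta(R)$ is not smooth — Lemma \ref{DS10JAG-2_3_2} gives only an $L^q$ gain per application of $\cL_\infty$ — so there is no ``smooth estimate'' to apply to $R_\theta$; (b) Lemma \ref{DS10JAG-3_2_4} requires $R$ to have a continuous super-potential, and its bound involves $\|S_n\|_*$, which stays of order $1$ (only the class $\{S_n\}$ decays; you conflate the two), and in any case a bound on the mass of the $\ddc$ in $\theta$ of a d.s.h.\ function does not control its oscillation $|\cU_{S_n}(R)-\cU_{S_n}(R_\theta)|$ — in Proposition \ref{skoda_clicks} that transfer is possible only because a modulus of continuity for $\cU_R$ is a \emph{hypothesis} there; (c) the density route via Remark \ref{dense} would need a bound uniform over smooth $R'$ with $\|R'\|_*\le1$, but for smooth $R'$ what Lemma \ref{BD23-3_3}(iii) yields is $|\cU_{S_n}(R')|\lesssim\|U_{R'}\|_{\cC^2}(\delta/d_p)^n$, and $\|U_{R'}\|_{\cC^2}$ is not controlled by $\|R'\|_*$.

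What actually closes the argument in \cite{DS10JAG} — and what your outline never produces — requires decomposing $S-rT_+=\beta+P^0$ ($\beta$ smooth closed, $P^0$ exact) at time $0$, not at time $n$, and then using two mechanisms special to pull-backs by an automorphism. For the exact part, Lemma \ref{pb-commute} applied to $f^{-n}$ (legitimate because $\cU_{P^0}=\cU_S-r\,\cU_+-\cU_\beta$ is continuous) gives $\cU_{d_p^{-n}(f^n)^*(P^0)}(R)=d_p^{-n}\,\cU_{P^0}\big((f^n)_*(R)\big)$, and one concludes from the mere boundedness of the continuous functional $\cU_{P^0}$ on the compact set $\cD^0_{k-p+1}\cap\cD^1_{k-p+1}$ together with the cohomological contraction $\|(f^n)_*(R)\|_*\lesssim(d_{p-1}+\ep)^n$ in bidegree $(k-p+1,k-p+1)$; neither the H\"older continuity of $\cU_+$ nor Lemma \ref{BD23-3_3}(iii) enters here. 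For the closed part $\beta$, whose pull-backs $d_p^{-n}(f^n)^*(\beta)$ have exponentially small classes but uncontrolled $\cC^2$-norms, one telescopes: each increment is $\ddc h_m$ with $h_m$ smooth of size $O\big((\delta'/d_p)^m\big)$, and its further pull-backs are estimated again through $\cU_{(f^j)^*(\ddc h_m)}(R)=\cU_{\ddc h_m}\big((f^j)_*(R)\big)$, i.e.\ by pairing the small potential $h_m$ against the push-forward $(f^j)_*(R)$, whose mass grows only like $(d_{p-1}+\ep)^j$. It is this ``small potential tested against push-forwards'' estimate, not any weak $\|\cdot\|_{-2}$ decay of the currents $S_n$, that permits testing against arbitrary currents of bounded mass and yields the rate $(\delta/d_p)^n$; without it your proposal cannot be completed.
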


We are now ready to prove Theorem \ref{goal_kahler}.

\begin{proof}[Proof of Theorem \ref{goal_kahler}.]
    We can rescale and assume without loss of generality that $\|\Phi\|_q\le1$, from which we have $\|\ddc\Phi\|_*\le1$, $M\le1$, and $\|\Phi\|_\infty\le1$, where $M>0$ is a constant such that $\Phi$ is $(M,q)$-$\log$-continuous. Fix a smooth closed form $\Psi$ such that $\Phi-\Psi$ is a normalized potential of $\ddc\Phi$. If we normalize this potential using Poincaré duality, we have that $\|\Psi\|_{\mathcal{C}^2} \lesssim \|\Phi\|_\infty \le 1$, where the implicit constant depends only on $X$ and the choice we made for the normalization. Since we have $M\le1$ and $\Psi$ has bounded $\mathcal{C}^2$ norm, it follows that $\Phi-\Psi$ is a $(\tilde{M},q)$-$\log$-continuous form for some $0<\tilde{M}\lesssim1$.
    
    Up to rescaling, we can apply Lemma \ref{BD23-3_3} to $S-rT_+$ and  we deduce that
    $$|\langle d_p^{-n}(f^n)^*(S)-rT_+,\Psi\rangle|\lesssim (\delta/d_p)^n,$$
    where the implicit constant depends only on $f$ and $r$. Therefore, up to replacing $\Phi$ with $\Phi-\Psi$, we can assume without loss of generality that $\Phi$ is the normalized potential $U_R$ of $R=\ddc\Phi$.

    \smallskip

    Now we prove inequality \eqref{conv-kahler} in the case when $S$ is smooth. Let $\mathcal{U}_n$ and $\mathcal{U}_+$ be the normalized super-potentials of $d_p^{-n}(f^n)^*(S)$ and $T_+$ respectively. We have that the left hand side of \eqref{conv-kahler} is equal to $|\mathcal{U}_n(R)-r\mathcal{U}_+(R)|$. Observe that this quantity is well defined since $d_p^{-n}(f^n)^*(S)$ is smooth for every $n$ and $T_+$ has a continuous super-potential. Since $S$ is smooth, by Lemma \ref{supofsmooth} the super-potential of $S$ is continuous. We then conclude this case appling Proposition \ref{DS10JAG_4-2-2}.
    
    Now we do the general case. Since we proved the estimate for smooth forms, we can subtract from $S$ a smooth closed form and assume without loss of generality $\{S\}=0$, which implies $r=0$. By Corollary \ref{logq_sup_logq} we have that $\mathcal{U}_R$ is $(M',q)$-$\log$-continuous for some constant $M'\lesssim\|\Phi\|_{\log^q}\le\|\Phi\|_q\le1$. Again By Corollary \ref{logq_sup_logq}, we also have that the left hand side of \eqref{conv-kahler} is equal to
    $$\left|\left\langle\frac{(f^n)^*}{d_p^n}(S),\Phi\right\rangle\right|=\left|\mathcal{U}_R\left(\frac{(f^n)^*}{d_p^n}(S)\right)\right|.$$
    We then conclude the proof applying Theorem \ref{ennequsuqupiuuno}.
\end{proof}

\subsection{An application: exponential mixing of all orders} \label{kahler_sub2}

Exponential mixing of all orders for functions bounded in norm $\|\cdot\|_q$ has been proved for the case of equilibrium states of endomorphisms of $\Pb^k$ by Bianchi-Dinh, see \cite[Theorem 5.3]{BD24GAFA}, although for the equilibrium measure the rate is not optimal (compare \cite[Theorem 3.4]{DNS10JDG}). We can apply the results of this section to obtain exponential mixing of all orders for this class of observables also in the case of automorphisms of K\"ahler manifolds. As a consequence, one can also obtain the central limit theorem for the same class of observables, see \cite{BG20JAM}.

\begin{thm} \label{expmix}
Let $f, d_p, \delta,T_\pm$ be as in the introduction, and define the measure $\mu:=T_+\wedge T_-$. Then, $\mu$ is exponentially mixing of all orders for all observables bounded in norm $\|\cdot\|_q$. More precisely, for every integers $\kappa\in \N^*$,  $0=n_0\leq n_1\leq \ldots\leq n_\kappa$ and every functions $\varphi_0,\varphi_1,\dots, \varphi_\kappa$ with $\|\varphi_j\|_q<+\infty$ for every $j$, we have
$$\left| \int \left(\prod_{j=0}^\kappa(\varphi_j\circ f^{n_j})\right) \,\di\mu -\prod_{j=0}^\kappa  \int \varphi_j \,\di \mu \right| \leq C_{\delta,\kappa,q}  \left(\frac{\delta}{d_p}\right)^{\frac{q}{q+1}\min_{0\leq j\leq \kappa-1}(n_{j+1}-n_j)/2}\prod_{j=0}^\kappa \|\varphi_j\|_q,$$
where  $C_{\delta,\kappa,q}>0$ is a constant independent of $n_1,\dots,n_\kappa,\varphi_0,\dots,\varphi_\kappa$.
\end{thm}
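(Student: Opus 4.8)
The plan is to prove the estimate by induction on the number $\kappa$ of gaps, reducing the general $(\kappa+1)$‑point correlation to the two–point case, which is itself extracted from the equidistribution results of this section. Throughout I would freely center the observables: since $|\int\varphi_j\,\di\mu|\le\|\varphi_j\|_\infty\le\|\varphi_j\|_q$, replacing $\varphi_j$ by $\varphi_j-\int\varphi_j\,\di\mu$ costs at most $\prod_l\|\varphi_l\|_q$, so I may assume all but one factor in each correlation to be $\mu$–centered. I will also use repeatedly that $\mu=T_+\wedge T_-$ is $f$–invariant and that $(f^n)^*T_+=d_p^nT_+$ and $(f^n)_*T_-=d_p^nT_-$, hence $(f^n)_*T_+=d_p^{-n}T_+$ and $(f^n)^*T_-=d_p^{-n}T_-$.

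For the base case $\kappa=1$ I would estimate $\langle\mu,\varphi_0(\psi\circ f^n)\rangle$ with $\psi$ centered. Writing $\mu=T_+\wedge T_-$ and attaching the iterated factor to $T_+$, one has $(\psi\circ f^n)\,T_+=d_p^{-n}(f^n)^*(\psi T_+)$, where $\psi T_+$ is \emph{not} closed but $\ddc(\psi T_+)=\ddc\psi\wedge T_+$ is an exact current of bidegree $(p+1,p+1)$. The essential point is that this higher–bidegree fluctuation contracts: by \eqref{dynamic-of-f}, with $\delta_{p+1}\le\delta'<\delta$ and $\|\ddc\psi\wedge T_+\|_*\lesssim\|\ddc\psi\|_*$ (using that $T_+$ has bounded mass and Hölder super–potential), one gets $\|d_p^{-n}(f^n)^*(\ddc\psi\wedge T_+)\|_*\lesssim(\delta/d_p)^n\|\ddc\psi\|_*$. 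Thus $(\psi\circ f^n)T_+$ is, modulo an exponentially small $\ddc$, a multiple of $T_+$ whose coefficient is forced to equal $\int\psi\,\di\mu=0$ by pairing with $T_-$. Turning this heuristic into the sharp $\log^q$ rate is exactly what Theorems \ref{goal_kahler} and \ref{ennequsuqupiuuno} provide, once Corollary \ref{logq_sup_logq} and Proposition \ref{DS10JAG-3_4_2} are used to equip the relevant wedge with a $\log^q$–continuous super–potential and the Hölder super–potentials of $T_\pm$ are invoked; this yields $|\langle\mu,\varphi_0(\psi\circ f^n)\rangle|\lesssim\|\varphi_0\|_q\|\psi\|_q\,(\delta/d_p)^{\frac{nq}{q+1}}$. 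The factor $1/2$ in the final statement is the price of distributing the time gap symmetrically, running the forward dynamics (through $T_+$ and $f^*$) on one half and the backward dynamics (through $T_-$ and $f_*$) on the other, which is what lets \emph{both} observables stay un–composed.

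For the inductive step I would let $g=\min_{0\le j\le\kappa-1}(n_{j+1}-n_j)$ be realized at an index $m$ and split the product at that gap into $B_1=\prod_{j\le m}\varphi_j\circ f^{n_j}$ and $B_2=\prod_{j>m}\varphi_j\circ f^{n_j}$. Coupling the later block to $T_+$ and the earlier one to $T_-$ and running the base–case mechanism for this pair decouples them across the minimal gap, giving $\langle\mu,B_1B_2\rangle=\langle\mu,B_1\rangle\langle\mu,B_2\rangle+O\big((\delta/d_p)^{\frac{qg}{2(q+1)}}\big)$. Each factor $\langle\mu,B_i\rangle$ is then a correlation of at most $\kappa$ functions whose own consecutive gaps are all $\ge g$, so the inductive hypothesis applies to both; multiplying out produces the main term $\prod_j\int\varphi_j\,\di\mu$ together with $O(\kappa)$ error terms, each decaying at a rate governed by some gap $\ge g$ and hence all dominated by $(\delta/d_p)^{\frac{qg}{2(q+1)}}$. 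Collecting these gives the constant $C_{\delta,\kappa,q}$, independent of the $n_j$ and of the $\varphi_j$.

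The main obstacle is the control of norms, and it is where the argument must be arranged with care. Products are handled by the fact that $\|\cdot\|_q$ is, up to a constant, an algebra norm: $\|\varphi\psi\|_{\log^q}\lesssim\|\varphi\|_{\log^q}\|\psi\|_{\log^q}$ for the continuity part, while in $\ddc(\varphi\psi)=\varphi\,\ddc\psi+\psi\,\ddc\varphi+\di\varphi\wedge\dic\psi+\di\psi\wedge\dic\varphi$ the dangerous cross terms have bounded mass because bounded functions with $\ddc$ of bounded mass have finite energy, so $\di\varphi\wedge\dic\psi$ is controlled by Cauchy–Schwarz, as in \cite[Section 3.2]{BD24GAFA}. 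The genuinely delicate point, however, is that one must \emph{never} take the $\|\cdot\|_q$–norm of a composed observable $\varphi_j\circ f^{n_j}$, whose $\ddc$–mass $\|(f^{n_j})^*\ddc\varphi_j\|_*$ grows exponentially in $n_j$ by \eqref{dynamic-of-f}. This is precisely what the Green–current device of the base case avoids: each $\varphi_j$ is attached to $T_+$ or $T_-$ while still un–composed, and the iterate $f^{n_j}$ is made to act on that current, where it merely rescales $T_\pm$ and contracts the higher–bidegree $\ddc$–part — so only the un–composed norms $\|\varphi_j\|_q$ ever appear, matching $\prod_{j=0}^\kappa\|\varphi_j\|_q$ in the statement. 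Making the block–decoupling rigorous therefore requires running this mechanism multilinearly rather than as a black–box two–point estimate, and Propositions \ref{DS10JAG-3_4_2}, \ref{skoda_clicks}, \ref{pushforward_kahler} together with Theorem \ref{ennequsuqupiuuno} are exactly the tools needed to propagate the $\log^q$–regularity through the nested constructions. Once Theorem \ref{expmix} is established, the central limit theorem for the same class of observables follows as in \cite{BG20JAM}.
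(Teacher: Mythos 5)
Your overall architecture --- decoupling the correlation at a single time gap by letting the dynamics act on $T_+$ and $T_-$, then inducting on $\kappa$ with the $\log^q$-machinery of Corollary \ref{logq_sup_logq}, Propositions \ref{DS10JAG-3_4_2} and \ref{skoda_clicks}, and Theorem \ref{ennequsuqupiuuno} --- is the same as the paper's, which realizes it on $X\times X$ via $F(z,w)=\big(f(z),f^{-1}(w)\big)$, the Green currents $\mathbb T_\pm=T_\pm\otimes T_\mp$, the diagonal current $[\Delta]$, and the auxiliary functions $\Phi^\pm$ of \eqref{phipm}; your explanation of the factor $1/2$ (splitting the gap symmetrically between forward and backward dynamics) also matches. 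The genuine gap is in your inductive step: you split at the \emph{minimal} gap $g$, while the paper splits at the \emph{maximal} gap, and this choice is essential, not cosmetic. As soon as a block contains two or more observables, compositions $\varphi_j\circ f^l$ are unavoidable --- attaching the block to $T_+$ or $T_-$ removes only one common iterate, and the relative times inside the block remain --- so your claim that ``only the un-composed norms $\|\varphi_j\|_q$ ever appear'' is false for $\kappa\ge2$. The paper quantifies exactly this unavoidable loss: $\|\varphi_j\circ f^l\|_{\log^q}\lesssim l^q$, hence $\|\Phi^\pm\|_{\log^q}\lesssim n_\kappa^{q(\kappa+1)}$, and the super-potential entering the decoupling step is only $\big(C_\kappa n_\kappa^{q(\kappa+1)},q\big)$-$\log$-continuous. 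By Theorem \ref{ennequsuqupiuuno}, the decoupling error across a gap $n$ is therefore of order $n_\kappa^{q(\kappa+1)/(q+1)}(\delta/d_p)^{\frac{q}{q+1}n/2}$, not $(\delta/d_p)^{\frac{q}{q+1}n/2}$ as you assert.

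With your choice of the minimal gap this polynomial prefactor cannot be absorbed into a constant independent of the $n_j$'s: take $\kappa=2$, fix $n_1$ (so $g\le n_1$ is fixed) and let $n_2\to\infty$; then $n_\kappa^{q(\kappa+1)/(q+1)}(\delta/d_p)^{\frac{q}{q+1}g/2}\to\infty$, so no admissible $C_{\delta,\kappa,q}$ closes your induction. The paper's choice of the maximal gap $n$ is precisely what repairs this: then $n_\kappa\le\kappa n$, so the prefactor is polynomial in $n$ and is swallowed by the exponential decay in $n$ (the same mechanism as $n\lesssim(\delta/\delta')^n$ in the proof of Theorem \ref{ennequsuqupiuuno}), and since $n\ge g$ while each block's own minimal gap is also $\ge g$, the induction still outputs the bound in terms of $\min_{j}(n_{j+1}-n_j)$ stated in the theorem. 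A secondary point you gloss over: even with the correct gap one cannot invoke Theorem \ref{goal_kahler} as a black box, because $[\Delta]$ is not smooth and the test object $\Phi^\pm\wedge\mathbb T_+$ is a current rather than a $\log^q$-form; the paper handles this by reproving the estimate for smooth $S$ and then extending to $S=[\Delta]$ as in \cite[Theorem 1.2]{BD23Ax}.
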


\begin{rmk}
    Exponential mixing of all orders in the case of automorphisms of compact K\"ahler manifolds has already been proved by the author and Wu for all d.s.h.\ functions in \cite{VW25Ax}, see also \cite{W21AM} for the case of two observables. However, we remark that for that class of observables the rate was not optimal. In particular, if we set $\rho$ to be the maximal modulus of the eigenvalues of $f^*$ acting on $H^{p,p}(X,\R)$ which are smaller than $d_p$, then in \cite{VW25Ax} it was not possible to get arbitrarily close to the rate $\theta:=(\max\{d_{p-1},d_{p+1},\rho\}/d_p)^{1/2}$. On the other hand, choosing $\delta$ sufficiently small and $q$ sufficiently large, in Theorem \ref{expmix} we are able to get a rate close to $\theta$, see also \cite[Theorem 1.2]{BD26AM} for the case of $\cC^2$ observables. It is still an open question whether $\theta$ is the optimal rate, or if it could be improved, for example to $\theta^2$.
\end{rmk}

\begin{proof}[Sketch of the proof of Theorem \ref{expmix}.]
    Consider the K\"ahler manifold $X\times X$ and its automorphism $F$ defined by $$F(z,w):=\big(f(z),f^{-1}(w)\big).$$ Using K\"unneth formula, one can show that the dynamical degree of order $k$ of $F$ is equal to $d_p^2$ (see also \cite[Section 4]{DS10CM}), which is an eigenvalue of multiplicity $1$ of $F^*$, and that all the others dynamical degrees and the eigenvalues of $F^*$ on $H^{k,k}(X\times X,\mathbb{R})$, except for $d_p^2$, are strictly smaller than $d_p\delta_0$. Hence $F$, $d_p\delta'$ and $d_p\delta$ satisfy the same conditions of $f$, $\delta'$ and $\delta$.

    It is not hard to see that the Green $(k,k)$-currents of $F$ and $F^{-1}$ are $\mathbb T_+:=T_+\otimes T_-$ and $\mathbb T_-:= T_-\otimes T_+$ respectively (see \cite[Section 4.1.8]{federer} for the tensor product of currents) and that they satisfy
    $$F^*(\mathbb T_+)=d_p^2\mathbb T_+\quad\text{and}\quad F_*(\mathbb T_-)=d_p^2\mathbb T_-.$$
    In particular, they have H\"older-continuous super-potentials. Let $\Delta$ denote the diagonal of $X\times X$. Then $[\Delta]$ is a positive closed $(k,k)$-current on $X\times X$.

    \medskip

    Assume without loss of generality that $\|\varphi_j\|_q\le1$ for every $j$. Let $0\le j_0\le \kappa-1$ be an index such that $n:=n_{j_0+1}-n_{j_0}$ is maximal. Set $\widetilde G_j:=\varphi_j\circ\pi_2\circ F^{n_{j_0}-n_j}$ for $j\le j_0$ and $\widetilde G_j:=\varphi_j\circ\pi_1\circ F^{n_j-n_{j_0+1}}$ for $j\ge j_0+1$, where $\pi_1$ and $\pi_2$ are the canonical projections of $X\times X$ into its components. Define the auxiliary functions $\Phi^\pm$ on $X\times X$ by
    \begin{equation}\label{phipm}
        \Phi^\pm:=\Phi^\pm_{n_0,\dots,n_\kappa}=\sum_{j=0}^\kappa \Big((\kappa+1)\widetilde{G}_j+\frac{\kappa}{2}\widetilde{G}^2_j\Big)\pm \prod_{j=0}^\kappa \widetilde{G}_j.
    \end{equation}
    Using the fact that $\|\ddc \varphi_j\|_*\le\|\varphi_j\|_q\le1$, one can prove as in \cite[Lemma 3.2 and Lemma 3.4]{VW25Ax} that
    $$\|\ddc \Phi^\pm\wedge\mathbb{T}_+\|_*\le c_\kappa$$
    for some constant $c_\kappa$ independent of the $n_j$'s and the $\varphi_j$'s.

    Arguing as in the proof of Proposition \ref{pushforward_kahler}, we have $\|\varphi_j\circ f^l\|_{\log^q}\lesssim l^q$ for every $0\le j\le \kappa$ and $l\in\N^*$. Using the fact that the norm $\|\cdot\|_{\log^q}$ is submultiplicative (up to a constant), we get $\|\Phi^\pm\|_{\log^q}\lesssim n_\kappa^{q(\kappa+1)}$. We can then apply Corollary \ref{logq_sup_logq} and Proposition \ref{DS10JAG-3_4_2} to obtain that $\ddc\Phi^\pm\wedge\mathbb T_+$ has a $(C_\kappa n_\kappa^{q(\kappa+1)},q)$-$\log$-continuous super-potential for some constant $C_\kappa$ independent of the $n_j$'s and the $\varphi_j$'s.

    \medskip

    Finally, we would like to apply Theorem \ref{goal_kahler} to $F^{-1}$, $[\Delta]$ and $\Phi^\pm\wedge\mathbb T_+$ instead of $f$, $S$ and $\Phi$, then do the same calculations of the proof of \cite[Lemma 3.7]{VW25Ax}, and conclude by induction using the fact that with our choice of $n$ we have $n_k\le\kappa n$. Unfortunately, we do not know if $\Phi\wedge\mathbb T_+$ has the regularity needed to apply the theorem directly. Instead, we can do as in the proof of Theorem \ref{goal_kahler} itself to prove the desired estimate when $S$ is smooth. We then argue as in the proof of \cite[Theorem 1.2]{BD26AM} to extend the statement to the case $S=[\Delta]$. This concludes the proof.
\end{proof}

\section{Endomorphisms of $\mathbb{P}^k$}\label{endo}
The goal of this section is to prove Theorem \ref{logq_esp_test}. As we will see, the non-invertibility of $f$ will yield some difficulties in controlling the regularity of the pull-back operator, which was immediate in the invertible case (see Lemma \ref{pullback_holder}).

We take $X=\mathbb{P}^k$, $\omega=\omega_{FS}$ the Fubini-Study form, and $f$ an endomorphism of degree $d\ge2$ and maximal multiplicity $m$ at any point in $\mathbb P^k$. We define the operators $L_s:=\dfrac{1}{d^s}f^*$ and $\Lambda_s:=\dfrac{1}{d^s}f_*$ acting on $(s,s)$-currents and $(k-s,k-s)$-currents, respectively. They satisfy the following properties, see for instance \cite[Section 5]{DS09AM}.

\begin{lm}
    For every $0\le s\le k$, the operators $L_s$ and $\Lambda_s$ are continuous with respect to the $*$-topology. They also preserve the mass of positive currents. In particular, the dynamical degree $d_s(f)$ is equal to $d^s$.
\end{lm}

Following the conventions of Section \ref{prels} for super-potentials of currents, we normalize using $\omega_{FS}^s$, i.e., for every $0\le s\le k$ we choose $(\alpha_{s,1})=(\omega_{FS}^s)$. From now on, we also fix the constant $\kappa:=25k(4m)^{k+1}$.

\begin{lm} \label{pullback_holder}
    There exists a constant $C\ge1$, depending only on $f$, such that the operator $L_{k-s}$ is $(C,1/\kappa)$-H\"older-continuous on $\mathcal{D}_{k-s}^0\cap\mathcal{D}_{k-s}^1$ for every $s$.
\end{lm}

\begin{proof}
    We will prove the assertion in five steps.\\

    \textbf{Step 1}: $f_*(\omega_{FS})$ has a $1/(2m)$-H\"older-continuous super-potential.
    
    Indeed, by \cite[Lemma 5.4.8]{DS09AM} there exists a function $u$ which is $1/m$-H\"older-continuous and such that $\ddc u=f_*(\omega_{FS})-d^{k-1}\omega_{FS}$. Since we are normalizing potentials using $\fs$, we have $\mathcal{U}_{f_*(\omega_{FS})}=\mathcal{U}_{f_*(\omega_{FS})-d^{k-1}\fs}$. Then, we just need to apply Lemma \ref{hold_sup_hold} with $\Phi=u$.\\

    \textbf{Step 2}: $\big(f_*(\omega_{FS})\big)^{s+1}$ has a $\frac{1}{2^s(2m)^{s+1}}$-H\"older-continuous super-potential.
    
    To obtain Step 2 from Step 1, it suffices to apply repeatedly Proposition \ref{DS10JAG-3_4_2} to the current $f_*(\omega_{FS})$ and its powers.\\

    \textbf{Step 3}: there exists a constant $C_s>0$ such that, for every positive closed $(s+1,s+1)$-current $\Omega'$ with $\Omega'\le \omega_{FS}^{s+1}$, we have that $f_*(\Omega')$ has a $\big(C_s,\frac{1}{25k(4m)^{s+1}}\big)$-H\"older-continuous super-potential.
    
    Indeed, we have $f_*(\Omega')\le f_*(\omega_{FS}^{s+1})\le  f_*(\omega_{FS})^{s+1}$. Step 3 then follows from Step 2 by applying Proposition \ref{DNV18AM-1_1} with $S'=f_*(\Omega')$ and $S=f_*(\omega_{FS})^{s+1}$.\\

    \textbf{Step 4}: there exists a constant $C_s'>0$ such that, for every smooth $(s,s)$-form $\Omega$ with $\|\Omega\|_{\mathcal{C}^2}\le1$, we have that $\ddc\Lambda_{k-s}(\Omega)$ has a $\big(C_s',\frac{1}{25k(4m)^{s+1}}\big)$-H\"older-continuous super-potential.
    
    Indeed, since $\|\Omega\|_{\mathcal{C}^2}\le1$, we have $\|\ddc\Omega\|_{\mathcal{C}^0}\lesssim1$. Hence, we can write $\ddc\Omega=\Omega^+-\Omega^-$ with $\Omega^\pm$ positive closed currents in the same cohomology class and $\Omega^\pm\lesssim\omega_{FS}^{s+1}$. It follows that
    $$\ddc\Lambda_{k-s}(\Omega)=\frac{1}{d^{k-s}}f_*(\ddc\Omega)=\frac{f_*(\Omega^+)}{d^{k-s}}-\frac{f_*(\Omega^-)}{d^{k-s}}.$$
    By the definition of super-potentials, we have
    $$\mathcal{U}_{\ddc\Lambda_{k-s}(\Omega)}=\frac{1}{d^{k-s}}\mathcal{U}_{f_*(\Omega^+)}-\frac{1}{d^{k-s}}\mathcal{U}_{f_*(\Omega^-)}.$$
    Step 4 follows from Step 3 applied with $\Omega^\pm$ instead of $\Omega'$.\\

    \textbf{Step 5}: conclusion.

    Let $S$ be a current in $\mathcal{D}_{k-s}^0\cap\mathcal{D}_{k-s}^1$. Since $L_s$ is continuous for the $*$-topology, it is also continuous with respect to $\dist_2$. Therefore, we can assume $S$ smooth, and the general result will follow from an approximation with smooth forms. From the definition of $\|\cdot\|_{-2}$, it suffices to fix a smooth $(s,s)$-form $\Omega$ with $\|\Omega\|_{\mathcal{C}^2}\le 1$ and give an estimate of $|\langle L_{k-s}(S),\Omega\rangle|=|\langle S,\Lambda_{k-s}(\Omega)\rangle|$.

    Since $\{S\}=0$, we can write $S=\ddc U_S$ with $U_S$ smooth, so
    $$\langle S,\Lambda_{k-s}(\Omega)\rangle=\langle \ddc U_S,\Lambda_{k-s}(\Omega)\rangle=\langle U_S,\ddc\Lambda_{k-s}(\Omega)\rangle=\mathcal{U}_{\ddc\Lambda_{k-s}(\Omega)}(S).$$
    We conclude the proof applying Step 4.
\end{proof}

\begin{lm} \label{pushforward_doesthings}
    Let $R$ be a current in $\mathcal{D}^0_{k-s+1}$ whose super-potential $\mathcal{U}_R$ is $w$-continuous for some modulus of continuity $w$. Then we have
    $$\left|\mathcal{U}_{\frac{\Lambda_{s-1}^n(R)}{d^n}}(S)\right| \le C^\frac{\kappa}{\kappa-1}w\left(\|S\|_{-2}^{1/\kappa^n}\right)$$
    for all $n\ge0$ and for every $S\in\mathcal{D}_s^0\cap\mathcal{D}_s^1$, where $C$ is the constant given by Lemma \ref{pullback_holder}.
\end{lm}

\begin{proof}
    From Lemma \ref{pullback_holder} we have that
    $$\|L_s(S)\|_{-2}\le C\|S\|_{-2}^{\frac{1}{\kappa}}\le C^\frac{\kappa}{\kappa-1}\|S\|_{-2}^{\frac{1}{\kappa}}.$$
    Therefore, by induction we have that
    \begin{equation} \label{holderind}
        \|L_s^n(S)\|_{-2}\le C^\frac{\kappa}{\kappa-1}\|S\|_{-2}^{\frac{1}{\kappa^n}}.
    \end{equation}

    From Lemma \ref{pb-commute} we have that $\mathcal{U}_{\frac{\Lambda_{s-1}^n(R)}{d^n}}(S)=\mathcal{U}_R\big(L_s^n(S)\big)$. From the fact that $L_s$ preserves the mass of positive currents we have that $\|L_s^n(S)\|_*\le 1$. Therefore, we can apply the $w$-continuity of $\mathcal{U}_R$ together with \eqref{holderind} to obtain
    $$\left|\mathcal{U}_{\frac{\Lambda_{s-1}^n(R)}{d^n}}(S)\right|=C^{\frac{\kappa}{\kappa-1}}\left|\mathcal{U}_R\big(C^{\frac{\kappa}{1-\kappa}}L_s^n(S)\big)\right|\le C^{\frac{\kappa}{\kappa-1}} w\big(C^{\frac{\kappa}{1-\kappa}}\|L_s^n(S)\|_{-2}\big) \le C^{\frac{\kappa}{\kappa-1}}w\left(\|S\|_{-2}^{1/\kappa^n}\right).$$
    The proof is complete.
\end{proof}

\begin{cor} \label{pushforward_logq}
    Let $R$ be a current in $\mathcal{D}^0_{k-s+1}$ whose super-potential $\mathcal{U}_R$ is $(M,q)$-$\log$-continuous for some $M\ge1,q>0$. Then the super-potential $\mathcal{U}_{\Lambda_{s-1}^n(R)}$ of $\Lambda_{s-1}^n(R)$ is $(d^n\kappa^{nq}C_\kappa M,q)$-$\log$-continuous for every $n\ge1$, with $C_\kappa=C^{\frac{\kappa}{\kappa-1}}$.
\end{cor}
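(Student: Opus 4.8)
The plan is to feed the explicit $\log$-modulus $w_{M,q}$ into Lemma \ref{pushforward_doesthings}, convert the resulting bound into an $(M',q)$-$\log$-continuity statement by an elementary manipulation of the logarithm, and finally pass from $\Lambda_{s-1}^n(R)/d^n$ to $\Lambda_{s-1}^n(R)$ by linearity of super-potentials.

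First I would use that, since $\mathcal{U}_R$ is $(M,q)$-$\log$-continuous, it is $w_{M,q}$-continuous with $w_{M,q}(\delta)=M(1+|\log\delta|)^{-q}$. Applying Lemma \ref{pushforward_doesthings} with $w=w_{M,q}$ then gives, for every $S\in\mathcal{D}_s^0\cap\mathcal{D}_s^1$,
$$\left|\mathcal{U}_{\Lambda_{s-1}^n(R)/d^n}(S)\right|\le C_\kappa\,w_{M,q}\!\left(\|S\|_{-2}^{1/\kappa^n}\right)=\frac{C_\kappa M}{\bigl(1+\kappa^{-n}|\log\|S\|_{-2}|\bigr)^q},$$
where $C_\kappa=C^{\kappa/(\kappa-1)}$ and I used $\bigl|\log(\|S\|_{-2}^{1/\kappa^n})\bigr|=\kappa^{-n}|\log\|S\|_{-2}|$.

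Next I would observe that, because $\kappa\ge1$ (indeed $\kappa=25k(4m)^{k+1}$), one has $1+\kappa^{-n}|\log\|S\|_{-2}|\ge\kappa^{-n}\bigl(1+|\log\|S\|_{-2}|\bigr)$, and hence
$$\bigl(1+\kappa^{-n}|\log\|S\|_{-2}|\bigr)^{q}\ge\kappa^{-nq}\bigl(1+|\log\|S\|_{-2}|\bigr)^{q}.$$
Substituting this into the previous display yields
$$\left|\mathcal{U}_{\Lambda_{s-1}^n(R)/d^n}(S)\right|\le\frac{\kappa^{nq}C_\kappa M}{\bigl(1+|\log\|S\|_{-2}|\bigr)^q},$$
so that $\mathcal{U}_{\Lambda_{s-1}^n(R)/d^n}$ is $(\kappa^{nq}C_\kappa M,q)$-$\log$-continuous.

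Finally, by linearity of super-potentials we have $\mathcal{U}_{\Lambda_{s-1}^n(R)}=d^n\,\mathcal{U}_{\Lambda_{s-1}^n(R)/d^n}$, which multiplies the modulus constant by $d^n$ and produces exactly the claimed $(d^n\kappa^{nq}C_\kappa M,q)$-$\log$-continuity. There is no genuine obstacle here: the argument is essentially bookkeeping of how pulling back $n$ times contracts the H\"older exponent by the factor $\kappa^{-n}$ (Lemma \ref{pullback_holder}) and how this feeds through the $\log$-modulus. The only point requiring attention is the direction of the logarithmic inequality, where one must exploit $\kappa\ge1$ so that the $+1$ can be absorbed into the $\kappa^{-n}$ factor; everything else is routine.
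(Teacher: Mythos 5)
Your proof is correct and follows exactly the route the paper intends: the corollary is stated as an immediate consequence of Lemma \ref{pushforward_doesthings}, obtained by plugging in $w=w_{M,q}$, using $1+\kappa^{-n}|\log\|S\|_{-2}|\ge\kappa^{-n}\bigl(1+|\log\|S\|_{-2}|\bigr)$ (valid since $\kappa\ge1$), and rescaling by $d^n$ via linearity of normalized super-potentials. All three steps check out, so there is nothing to add.
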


As a consequence of Corollary \ref{pushforward_logq}, we have the following result of convergence for currents with log-continuous super-potentials. The case of H\"older-continuous super-potentials is essentially contained in the proofs of \cite[Theorem 5.4.4 and Proposition 5.4.13]{DS09AM}, using a slightly different definition of super-potentials.

\begin{thm} \label{logq_esp_pot}
    For every $M\ge1,q>0$, and for all $R\in\mathcal{D}_{k-s+1}^0\cap\mathcal{D}_{k-s+1}^1$ with $(M,q)$-$\log$-continuous super-potential $\mathcal{U}_R$, we have
    \begin{equation}\label{lep}
        \left|\mathcal{U}_R\big(L_s^n(S)\big)\right| \lesssim \left(1+M^{\frac{1}{q+1}}\right)\left(\dfrac{\kappa}{d}\right)^{\frac{nq}{q+1}}
    \end{equation}
    for every $S\in\mathcal{D}_s^0\cap\mathcal{D}_s^1$, where the implicit constant depends only on $f$ and $q$.
\end{thm}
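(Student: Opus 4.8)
The plan is to run the same duality argument used for Theorem~\ref{ennequsuqupiuuno} in the K\"ahler case, moving the iteration off the pulled-back current $S$ and onto the pushed-forward current $R$, and then to finish with the Skoda-type bound of Proposition~\ref{skoda_clicks}. Since $L_s^n=d^{-sn}(f^n)^*$ and $\Lambda_{s-1}^n=d^{-(s-1)n}(f^n)_*$, the linearity of the super-potential in its argument, Lemma~\ref{pb-commute} applied to $f^n$, linearity in the current, and Lemma~\ref{shuffle} give successively
\[
    \cU_R\big(L_s^n(S)\big)=\frac{1}{d^{sn}}\cU_{(f^n)_*(R)}(S)=\frac{d^{(s-1)n}}{d^{sn}}\cU_{\Lambda_{s-1}^n(R)}(S)=\frac{1}{d^n}\cU_S\big(\Lambda_{s-1}^n(R)\big).
\]
Lemma~\ref{shuffle} applies because $\Lambda_{s-1}^n(R)$ has continuous super-potential, as recorded below. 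Everything then reduces to estimating $\cU_S\big(\Lambda_{s-1}^n(R)\big)$ while carrying the $d^{-n}$ prefactor.

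To apply Proposition~\ref{skoda_clicks} to $\Lambda_{s-1}^n(R)$ two things must be checked. First, $\Lambda_{s-1}^n(R)\in\cD^0_{k-s+1}\cap\cD^1_{k-s+1}$: exactness is preserved by $f_*$, and since $\Lambda_{s-1}$ preserves the mass of positive currents, splitting $R$ into its positive closed parts gives $\|\Lambda_{s-1}^n(R)\|_*\le\|R\|_*\le1$. This is precisely where the projective case is lighter than the K\"ahler one: no rescaling is needed, because the normalization built into $\Lambda_{s-1}$ exactly cancels the mass growth of $f_*$ on $(k-s+1,k-s+1)$-classes. Second, Corollary~\ref{pushforward_logq} tells us that $\cU_{\Lambda_{s-1}^n(R)}$ is $\big(d^n\kappa^{nq}C_\kappa M,q\big)$-$\log$-continuous. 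Feeding this into Proposition~\ref{skoda_clicks}, with $\Lambda_{s-1}^n(R)$ playing the role of the regular current and $S\in\cD^0_s\cap\cD^1_s$, yields
\[
    \big|\cU_S\big(\Lambda_{s-1}^n(R)\big)\big|\le A\Big(1+\big(d^n\kappa^{nq}C_\kappa M\big)^{\frac{1}{q+1}}\Big).
\]

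Combining the two displays, the main term is $A(C_\kappa M)^{1/(q+1)}\,d^{-n}(d^n)^{1/(q+1)}\kappa^{nq/(q+1)}$, and since $d^{-n}(d^n)^{1/(q+1)}=d^{-nq/(q+1)}$ this equals $A(C_\kappa M)^{1/(q+1)}(\kappa/d)^{nq/(q+1)}$; the remaining term $Ad^{-n}$ is dominated by $(\kappa/d)^{nq/(q+1)}$ since $\kappa\ge1$ and $d^{-n}\le d^{-nq/(q+1)}$. As $C_\kappa\ge1$, one bounds $(C_\kappa M)^{1/(q+1)}$ by $C_\kappa^{1/(q+1)}(1+M^{1/(q+1)})$, giving the stated estimate with implicit constant $AC_\kappa^{1/(q+1)}$, which depends only on $f$ (through $A$ and $C$) and on $q$; the case $n=0$ is immediate from Lemma~\ref{shuffle} and Proposition~\ref{skoda_clicks}. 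I do not expect a real obstacle inside this proof: its content is purely the duality rewriting, the mass check, and the exponent bookkeeping that turns the regularity blow-up $d^n\kappa^{nq}$, once damped by Skoda's exponent $1/(q+1)$ and the prefactor $d^{-n}$, into the announced rate $(\kappa/d)^{nq/(q+1)}$. The genuine difficulty has already been isolated and resolved upstream in Corollary~\ref{pushforward_logq}, where the non-invertibility of $f$ manifests as the factor $\kappa^{nq}$ coming from the compounding of the H\"older exponent $1/\kappa$ of the pull-back over $n$ iterations.
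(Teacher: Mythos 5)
Your proposal is correct and follows essentially the same route as the paper: the identity $\cU_R\big(L_s^n(S)\big)=d^{-n}\,\cU_S\big(\Lambda_{s-1}^n(R)\big)$ via Lemmas \ref{pb-commute} and \ref{shuffle}, then Corollary \ref{pushforward_logq} feeding into Proposition \ref{skoda_clicks}, with the same exponent bookkeeping yielding $A d^{-n}+A(C_\kappa M)^{1/(q+1)}(\kappa/d)^{nq/(q+1)}$. The extra verifications you supply (the mass bound $\|\Lambda_{s-1}^n(R)\|_*\le 1$ from mass preservation, the $n=0$ case, and the domination of the $Ad^{-n}$ term) are exactly the steps the paper leaves implicit, and your remark that no rescaling is needed here, unlike in the K\"ahler case, is accurate.
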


\begin{proof}
    From Lemmas \ref{pb-commute} and \ref{shuffle} we have
    $$\mathcal{U}_R\big(L_s^n(S)\big)=\mathcal{U}_{\frac{\Lambda_{s-1}^n(R)}{d^n}}(S)=\frac{1}{d^n}\mathcal{U}_{\Lambda_{s-1}^n(R)}(S)=\frac{1}{d^n}\mathcal{U}_S\big(\Lambda_{s-1}^n(R)\big).$$

    Applying Proposition \ref{skoda_clicks} to the current $\Lambda_{s-1}^n(R)$, and using Corollary \ref{pushforward_logq}, we obtain
    $$\left|\mathcal{U}_R\big(L_s^n(S)\big)\right|=\frac{1}{d^n}\left|\mathcal{U}_S\big(\Lambda_{s-1}^n(R)\big)\right| \le \frac{A}{d^n}+A(C_\kappa M)^{\frac{1}{q+1}} \left(\frac{\kappa}{d}\right)^{\frac{nq}{q+1}}.$$
    The proof is complete.
\end{proof}

Assume now that $\kappa=25k(4m)^{k+1}<d$. This is true, up to taking iterates, for a generic $f$, see for instance \cite[Lemma 5.4.5]{DS09AM}.

\smallskip

We are now ready to prove Theorem \ref{logq_esp_test}.

\begin{proof}[Proof of Theorem \ref{logq_esp_test}.]
    Assume without loss of generality that $\|\Phi\|_q\le1$. Define $R:=\ddc\Phi$, and observe that $\|R\|_*=\|\ddc\Phi\|_*\le\|\Phi\|_q\le1$. By Corollary \ref{logq_sup_logq} we have that $\mathcal{U}_R$ is $(M,q)$-$\log$-continuous for some constant $M\lesssim\|\Phi\|_{\log^q}\le\|\Phi\|_q\le1$. Again by Corollary \ref{logq_sup_logq}, we also have
    $$\langle L_s^n(S)-T^s,\Phi\rangle=\langle L_s^n(S-T^s),\Phi\rangle=\mathcal{U}_R\big(L_s^n(S-T^s)\big).$$
    It then suffices to apply Theorem \ref{logq_esp_pot} with $R$ and $(S-T^s)/2$ instead of $S$ to obtain \eqref{pk-end-1}. The proof is complete.
\end{proof}

\end{document}